\newcommand{\y}{y}
\theoremstyle{plain}
\newtheorem{thm}{Theorem}[section]
\newtheorem{cor}[thm]{Corollary}
\newtheorem{lem}[thm]{Lemma}
\newtheorem{prop}[thm]{Proposition}
\newtheorem{defn}[thm]{Definition}
\newtheorem{rem}[thm]{Remark}
\newtheorem{exm}[thm]{Example}
\newtheorem{prob}[thm]{Problem}
\newcommand{\icol}[1]{
  \left(\begin{smallmatrix}#1\end{smallmatrix}\right)%
}
\title{Semi-discrete multi-to -one dimensional variational problems}
\date{\today}
\author{Omar Abdul Halim} 
\author{Daniyar Omarov}
\author{Brendan Pass}
\email{oabdulh1@ualberta.ca, daniyar@ualberta.ca, pass@ualberta.ca}
\thanks{The work of OAH was completed in partial fulfillment of the requirements for a doctoral degree in
mathematics at the University of Alberta. BP is pleased to acknowledge the support of Natural Sciences and
Engineering Research Council of Canada Discovery Grant number 04864-2024. DO gratefully acknowledge that this research was supported in part by the Pacific Institute for the Mathematical Sciences.}
\address{University of Alberta, Edmonton, Alberta, Canada}
\begin{document}
\singlespacing
\begin{abstract}
We study a class of semi-discrete variational problems that arise in economic matching and game theory, where agents with continuous attributes are matched to a finite set of outcomes with a one dimensional structure. 
Such problems appear in applications including Cournot-Nash equilibria, and hedonic pricing,   and can be formulated as problems involving optimal transport between spaces of unequal dimensions.  In our discrete strategy space setting, we establish analogues of results developed for a continuum of strategies in \cite{nenna2020variational}, ensuring solutions have a particularly simple structure under certain conditions.  This has important numerical consequences, as it is natural to discretize when numerically computing solutions. We leverage our results to develop efficient algorithms for these problems  which  scale significantly better than standard optimal transport solvers, particularly when the number of discrete outcomes is large, provided our conditions are satisfied. We also establish rigorous convergence guarantees for these algorithms.  We illustrate the advantages of our approach by solving a range of numerical examples; in many of these our new solvers outperform alternatives by  a considerable margin.

\end{abstract}

\large\maketitle

\section{Introduction}
A broad class of problems in economics, game theory, and urban planning can be formulated as variational problems on the space of probability measures involving optimal transport.  More explicitly, they amount minimizing a functional of the form:

\begin{equation}\label{eqn: variational problem}
\min_{\nu \in \mathcal{P}(Y)} \left\{ \mathcal{W}_c(\mu, \nu) + \mathcal{F}(\nu) \right\},
\end{equation}
where $\mu$ is a fixed distribution of agents, $\nu$ a free distribution of strategies, $\mathcal{W}_c$ is the cost of optimal transport between them with cost function $c$ and $\mathcal{F}$ a functional on the space $\mathcal{P}(Y)$ of probability measures, with different forms depending on the precise problem under consideration.  Examples include  \emph{Cournot-Nash equilibria} models~\cite{blanchet2014Nash,blanchet2014remarks, blanchet2016optimal,BlanchetCarlierNenna2017}, where agents interact strategically in markets with congestion or competition effects; \emph{urban planning} problems~\cite{AguehCarlier2011,buttazzo2005model,buttazzo2009mass,CarlierEkeland2004,carlier2005urban}, which involve the optimal spatial allocation of infrastructure or public goods; and \emph{hedonic pricing} models~\cite{chiappori2010hedonic,COCV_2005__11_1_57_0}, where agents evaluate  goods or services based on individual preferences. 

  These problems are computationally challenging due to the complexity of optimal transport.  However, in at least one important regime they can be simplified considerably.  It is often reasonable to parametrize agents by a distribution $\mu$ on a  high dimensional space $X$ (reflecting a high degree of heterogeneity), whereas available strategies have more limited variability and are therefore modeled by a one-dimensional space $Y$.  The corresponding \emph{multi-to one-dimensional} optimal transport problem admits a greatly simplified characterization of solutions, provided that a certain condition on $c$, $\mu$ and $\nu$, recently developed in \cite{chiappori2017multi} and known as \emph{nestedness}, is satisfied.  Furthermore, hypotheses on $\mathcal{F}$, $c$ and $\mu$ have recently been developed ensuring that for the optimal $\nu$ in \eqref{eqn: variational problem}, the optimal transport problem arising there is nested \cite{nenna2020variational}. 

These advances greatly enhance the theoretical tractability  of \eqref{eqn: variational problem}, but a significant issue arises when trying to apply them numerically. In practice, when computing solutions, one typically discretizes the target space, so that the one-dimensional space $Y$ is replaced by a finite set $Y_N$ and the unknown measure $\nu \in \mathcal{P}(Y_N)$ becomes discrete.   However, the original notion of nestedness relies on the continuous structure of $Y$ in a crucial way, and so the simple characterization of solutions in \cite{nenna2020variational} cannot be directly exploited numerically.\footnote{Some related computations with discrete measures are carried out in \cite{nenna2024note}.  Calculations there are done using variables in a continuous ambient space $Y$, and the effect on the solution of passing back and forth between discrete and continuous problems is neglected.}

On the other hand, an analogue of nestedness for optimal transport problems with discrete target spaces has been recently introduced in \cite{halim2025multitoonedimensionalscreeningsemidiscrete}, together with an essentially closed form characterization of solutions when this condition is satisfied.  The purpose of this paper is twofold.  First, to translate the analysis in \cite{nenna2020variational} to the discrete target setting, and establish conditions on $\mu$, $c$, $F$, and $Y_N$ under which solutions to \eqref{eqn: variational problem} satisfy discrete nestedness, and, secondly, to exploit these results to develop efficient numerical algorithms.

Our focus here is therefore on discrete versions of the two main types of functionals $\mathcal{F}$ studied in \cite{nenna2020variational}: \emph{internal energies}, or \emph{congestion terms}, modeling situations where agents prefer strategies which differ from those chosen by other agents, and those modeling \emph{hedonic pricing problems}, in which both buyers and sellers seek to maximize their utilities while exchanging goods chosen from a set $Y_N$ of feasible goods or contracts; in this case, the functional $\mathcal{F}$ is itself the optimal transport distance to another fixed distribution $\mu_1$.

The paper is organized as follows. In Section~\ref{nestedness}, we review relevant background on optimal transport and discrete nestedness. Section~\ref{internal energy} considers the semi-discrete framework of the variational problem with a congestion term and introduces a numerical method for computing the solution. In Section~\ref{hedonic problem}, we present the hedonic pricing problem in the semi-discrete setting and propose an algorithm that utilizes the nested structure of the solution to improve computational efficiency in the case of nested solution.

\section{Nestedness of optimal transport}\label{nestedness}
\subsection{Optimal transport}
Here we briefly introduce the optimal transport problem; a fuller introduction can be found in, for example, \cite{santambrogio2015optimal,Villani2009}.  Given probability measures $\mu$ and $\nu$ on bounded sets $X \subset \mathbb{R}^{d_X}$ and  $Y\subseteq \mathbb{R}^{d_Y}$, respectively, and a cost function $c \in C(X \times Y)$, the optimal transport problem is to minimize


\begin{equation}\label{eqn: OT problem}
\mathcal{W}_c(\mu,\nu) = \min_{\gamma \in \Gamma(\mu,\nu)} \int_{X \times Y} c(x,y) \, d\gamma(x,y),
\end{equation}
where $\Gamma(\mu, \nu)$ consists of all probability measures $\gamma$ on $X \times Y$ with marginals $\mu$ and  $\nu$.

As a linear program, \eqref{eqn: OT problem} admits a dual problem and it is well known that strong duality holds; that is,

\begin{equation}\label{eqn: OT dual}
\mathcal{W}_c(\mu,\nu) = \sup_{(u,v) \in \mathcal{V}} \left\{ \int_X u(x) d\mu(x) + \int_Y v(y) d\nu(y) \right\},
\end{equation}
where $\mathcal{V} = \{(u,v) \in L^1(\mu) \times L^1(\nu) \mid u(x) + v(y) \leq c(x,y), \ \forall (x,y) \in X \times Y \}$ is the set of feasible potentials.  Moreover, it is well known that a minimizer $\gamma$ in \eqref{eqn: OT problem} and a maximizer $(u,v)$ in \eqref{eqn: OT dual} both exist, and that $u(x) + v(y) = c(x,y)$, $\gamma$ almost everywhere.  Furthermore, the optimal dual potentials can be chosen to be \emph{$c$-concave}, meaning that $u(x) = \inf_{y \in Y} \{ c(x,y) - v(y) \}$ and  $v(y) = \inf_{x \in X} \{ c(x,y) - u(x)\}$.

We are especially interested here in the semi-discrete setting, where the source measure $ \mu $ is continuous, while the target measure  $ \nu = \sum_{i=1}^N \nu_i \delta_{y_i} $ is supported on finitely many points $ \{y_i\}_{i=1}^N $ (see \cite{Galichon2016,merigot2021optimal,PeyreCuturi2019} for a detailed review). In this case, the optimal dual potential $v=(v_1,...,v_N)$ becomes a vector in $\mathbb{R}^N$ where $v_i=v(y_i).$ It induces the measurable sets 
\[
X_i := \left\{ x \in X \mid c(x,y_i) - v_i \leq c(x,y_j) - v_j \text{ for all } j \right\}.
\]
Any optimal measure $\gamma$ can only transport $x$ to $y_i$ if $x\in X_i$, and the dual potential $ u $ is piecewise defined on these sets by $u(x) = c(x, y_i) - v_i \quad \text{for } x \in X_i.$
On the boundary between regions $ X_i $ and $ X_j $, the potential satisfies $c(x, y_i) - v_i = c(x, y_j) - v_j,$ which determines the geometry of the transport cells and the interfaces between them.

\subsection{A sufficient condition for discrete nestedness}
 Throughout the rest of the paper, we specialize to the case where $d_Y=1$ and simply denote $d_X=d$. Let $X\subset \mathbb{R}^d$ and $Y=\{y_1,\dots, y_N\}\subset \mathbb{R}$ 
be bounded sets such that $X$ is open. We will make the following assumption, which is a slight weakening of the well known twist condition, on $c\in C^2(X\times Y):$   
\begin{enumerate}[label={(H\arabic*)}]
\item\label{nonzero1} $D_xc(x,y_{i+1}) -D_xc(x,y_{i}) \neq 0 \text{ for all $x\in X$ and $1\le i\le N-1.$}$

\end{enumerate}

Let $\mu \in \mathcal{P}(X)$ , and let $\nu \in \mathcal{P}(Y)$ be given by $\nu = \sum_{i=1}^N \nu_i \delta_{y_i} ,$ where $\delta_{y_i}$ is the Dirac mass at $y_i \in Y$ and $\nu_i \in [0,1]$.
 We define the following level and sub-level sets:
\[ X^N_{=}(y_i,k):=\{x\in X\,:\, c(x,y_{i+1})-c(x,y_{i})=k\},\]
\[X_{\ge}^N(y_i,k):=\{x\in X\,:\, c(x,y_{i+1})-c(x,y_{i})\ge k\},\]
and $X_{>}^N(y,k):=X_{\ge}^N(y,k)\setminus X_{=}^N(y,k).$ For the rest of the paper we assume that  $\mu(X_=^N(y_i,k)) = 0$ for all $y_i$ and $k.$

\begin{rem}
    By condition \ref{nonzero1} and using the Implicit Function Theorem on the equation $c(x,y_{i+1})-c(x,y_{i})=k$ for some constant $k,$ we get that $X_=^N(y_i,k)$ has codimension $1.$  In particular, standard regularity assumptions on $\mu$ then imply our hypothesis $\mu(X_=^N(y_i,k)) = 0$.
\end{rem}

\begin{defn}
     We say that the optimal transport problem $(c,\mu,\nu)$ is discretely nested if for all $1\le i\le N-2$ such that $\nu_{i+1}>0$, we have 
    \[ X_{\ge}^N(y_{i},k^N(y_{i}))\subset X_{>}^N(y_{i+1},k^N(y_{i+1})),\] 
    where $k^N(y_{r})$ is chosen such that $\mu(X_{\ge}^N(y_{r},k^N(y_{r})))=\sum_{i=1}^r\nu_i.$
\end{defn}
Note that this definition is equivalent to the definition of discrete nestedness introduced in \cite{halim2025multitoonedimensionalscreeningsemidiscrete}, where, whenever $\sum_{r=i+1}^j \nu_r > 0$, we require $X_{\ge}^N(y_{i},k^N(y_{i})) \subset X_{>}^N(y_{j},k^N(y_{j})).$

The discrete nestedness condition imposes a monotonicity structure on the family of superlevel sets $ X_{\ge}^N(y_i, k^N(y_i)) $. This structure ensures that the regions associated with successive outcomes are ordered in a way that enables a piecewise construction of the optimal transport map.  The following result provides a precise characterization of the solution under this condition.  It is proven in \cite{halim2025multitoonedimensionalscreeningsemidiscrete}, and can be seen as a semi-discrete version of Theorem 4 in \cite{chiappori2017multi}.

\begin{thm}[\cite{halim2025multitoonedimensionalscreeningsemidiscrete}]\label{nestchar}
Assume the optimal transport problem $(c,\mu, \nu)$ is discretely nested. Then, setting $X_1=X^N_{>}(y_1, k^N(y_1)),$ $X_N=X\setminus X^N_{\ge}(y_{N-1}, k^N(y_{N-1})),$ and $X_i = X^N_{>}(y_i, k^N(y_i)) \setminus X^N_{\geq}(y_{i-1}, k^N(y_{i-1}))$ for all $1<i<N$, the optimal transport plan is unique and pairs each $x \in X_i$ with $y_i$ for all $1\leq i \leq N$.  Furthermore, the optimal potentials $(u,v)$ are defined by $u(x) = c(x, y_i) - v_i$ for all $x \in X_i$,  where 
\[
v_i = \sum_{j=1}^{i-1}k^N(y_j),
\]
for $1<i\le N$ where $v_1=0.$ 
\end{thm}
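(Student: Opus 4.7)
The plan is to verify the statement by directly constructing the dual potentials described and checking optimality via strong duality; this reduces everything to a sequence of pointwise inequalities that follow from the nestedness hypothesis.

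First, I would confirm that the sets $\{X_i\}_{i=1}^N$ form a partition of $X$ up to a $\mu$-null set, with $\mu(X_i)=\nu_i$. Since the discrete nestedness hypothesis gives $X_{\ge}^N(y_i,k^N(y_i))\subset X_{>}^N(y_{i+1},k^N(y_{i+1}))$ whenever $\nu_{i+1}>0$, the family of sub-level sets is linearly ordered by inclusion, so the telescoping differences $X_i$ are pairwise disjoint. The mass identity $\mu(X_i)=\nu_i$ then follows from the defining property $\mu(X_{\ge}^N(y_r,k^N(y_r)))=\sum_{j\le r}\nu_j$ and the assumption that $\mu(X_{=}^N(y_i,k))=0$, which lets me freely interchange $X_{>}^N$ and $X_{\ge}^N$ modulo $\mu$.

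Next, with $v_i:=\sum_{j=1}^{i-1}k^N(y_j)$ (so $v_{i+1}-v_i=k^N(y_i)$), I would define $u(x):=c(x,y_i)-v_i$ for $x\in X_i$, giving equality $u(x)+v_i=c(x,y_i)$ on $X_i$ by construction. The heart of the verification is the feasibility inequality $u(x)+v_j\le c(x,y_j)$ for every $j$ and $\mu$-a.e.\ $x\in X_i$, i.e.,
\[
c(x,y_j)-c(x,y_i)\ge v_j-v_i=\sum_{\ell=i}^{j-1}k^N(y_\ell)\quad(j>i),\qquad c(x,y_i)-c(x,y_j)\le\sum_{\ell=j}^{i-1}k^N(y_\ell)\quad(j<i).
\]
For the $j>i$ case I would argue by induction on $j$: for $j=i+1$ the inclusion $X_i\subset X_{>}^N(y_i,k^N(y_i))$ gives $c(x,y_{i+1})-c(x,y_i)\ge k^N(y_i)$ (in fact strictly, $\mu$-a.e.); then nestedness $X_{\ge}^N(y_\ell,k^N(y_\ell))\subset X_{>}^N(y_{\ell+1},k^N(y_{\ell+1}))$ propagates this step by step, yielding $c(x,y_{\ell+1})-c(x,y_\ell)\ge k^N(y_\ell)$ for every $\ell\ge i$; summing gives the claim. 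For $j<i$ I would run the contrapositive of nestedness: $x\notin X_{\ge}^N(y_{i-1},k^N(y_{i-1}))$ forces $x\notin X_{\ge}^N(y_{\ell},k^N(y_{\ell}))$ for every $\ell<i-1$, hence $c(x,y_{\ell+1})-c(x,y_\ell)\le k^N(y_\ell)$ at each such $\ell$, and summation closes the inequality.

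With feasibility and the equality $u(x)+v_i=c(x,y_i)$ on $X_i$ established, strong duality (\ref{eqn: OT dual}) immediately implies that $(u,v)$ is a maximizing pair and that the plan sending $X_i$ to $y_i$ is optimal. Uniqueness follows because the inequalities above are strict for $j\ne i$ at $\mu$-a.e.\ $x\in X_i$ (the $X_{>}^N$ vs.\ $X_{=}^N$ distinction is exactly what the hypothesis $\mu(X_{=}^N)=0$ is there to control): any optimal $\gamma$ must be supported on $\{u(x)+v(y)=c(x,y)\}$, and that set intersects $X_i\times Y$ only at $(x,y_i)$ for $\mu$-a.e.\ $x$.

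I expect the technically delicate step to be the bookkeeping for the $j<i$ case: the inclusion chain provided by nestedness naturally propagates information about $y_{i+1}$-differences upward, but the downward propagation must be extracted as a contrapositive, and keeping track of strict versus non-strict inequalities while relying on $\mu(X_{=}^N)=0$ to discard null sets is the only place where real care is needed. Everything else is a routine application of the duality framework recalled in Section~\ref{nestedness}.
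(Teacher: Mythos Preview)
The paper does not prove this theorem; it is stated with a citation to \cite{halim2025multitoonedimensionalscreeningsemidiscrete}, where the argument is carried out, so there is no in-paper proof to compare against.

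That said, your approach is the natural and correct one: explicitly build the candidate pair $(u,v)$, verify dual feasibility by telescoping the increments $c(x,y_{\ell+1})-c(x,y_\ell)$ against the thresholds $k^N(y_\ell)$ via the chain of nestedness inclusions, and then invoke strong duality and the strictness of the inequalities for uniqueness. This is exactly the strategy one expects the cited proof to follow (and it is the semi-discrete analogue of the argument in \cite{chiappori2017multi}). The one place where your write-up would need a little extra care is when some intermediate $\nu_{\ell+1}=0$: the nestedness inclusion $X_{\ge}^N(y_\ell,k^N(y_\ell))\subset X_{>}^N(y_{\ell+1},k^N(y_{\ell+1}))$ is only assumed for indices with $\nu_{\ell+1}>0$, so your step-by-step propagation could stall across a zero-mass index. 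The equivalent formulation noted immediately after the definition (inclusions hold from $y_i$ to $y_j$ whenever $\sum_{r=i+1}^{j}\nu_r>0$) is precisely what lets you bridge such gaps, and invoking it would make your induction airtight.
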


We now turn to the task of establishing sufficient conditions for discrete nestedness.   Our work in the remainder of this section mirrors the theory developed for continuous one-dimensional targets in Section 2.2 of \cite{nenna2020variational}. For $1\le i\le N-2,$ we define the minimal mass difference as follows,

\[D^{\min}_{\mu}(y_i,k_i)=\mu\left(X_\ge ^N(y_{i+1},k_{\max}(y_i,k_i))\setminus X_\ge ^N(y_{i},k_i) \right)\] where $k_{\max}(y_i,k_i)=\sup\left\{k\in \mathbb{R}:\, X_\ge^N(y_i,k_i)\subseteq X_\ge^N(y_{i+1},k)\right\}$ for some  $k_i.$  This minimal mass difference tells us the least additional $\mu$-mass required so that a superlevel set $ X_{\ge}^N(y_i, k_i) $  becomes contained in a corresponding set for $ y_{i+1} $. This leads to a necessary condition for discrete nestedness, as well as a sufficient condition under strict inequality.

\begin{thm}\label{nested theorem}
     If $(c,\mu,\nu)$ is discretely nested, then $D^{\min}_{\mu}(y_i,k^N(y_i))\le\nu_{i+1}$ for all $1\le i\le N-2.$ Conversely, if $D^{\min}_{\mu}(y_i,k^N(y_i))<\nu_{i+1}$ for all $1\le i\le N-2,$ then $(c,\mu,\nu)$ is discretely nested.
\end{thm}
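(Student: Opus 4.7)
The plan is to exploit the dual interpretation of $k_{\max}(y_i,k^N(y_i))$ as an infimum over the set $A:=X_\ge^N(y_i,k^N(y_i))$, namely
\[ k_{\max}(y_i, k^N(y_i))=\inf_{x\in A}\big(c(x,y_{i+2})-c(x,y_{i+1})\big), \]
together with the monotonicity of $k\mapsto \mu(X_\ge^N(y_{i+1},k))$, which is strictly decreasing wherever the relevant superlevel set changes, by the standing hypothesis $\mu(X_=^N(y_{i+1},k))=0$. I would also assume without loss of generality that $\nu_j>0$ for each $j$ (else simply discard the atoms with $\nu_j=0$), so that all $k^N(y_j)$ are uniquely determined and nestedness is imposed at every index.

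For the forward direction, assume $(c,\mu,\nu)$ is discretely nested and fix $1\le i\le N-2$. Nestedness gives $A\subseteq X_>^N(y_{i+1},k^N(y_{i+1}))$, so every $x\in A$ satisfies $c(x,y_{i+2})-c(x,y_{i+1})> k^N(y_{i+1})$. Taking the infimum over $A$ yields $k_{\max}(y_i,k^N(y_i))\ge k^N(y_{i+1})$, whence $X_\ge^N(y_{i+1},k_{\max}(y_i,k^N(y_i)))\subseteq X_\ge^N(y_{i+1},k^N(y_{i+1}))$ by monotonicity. Since $A$ sits inside both sets, subtracting $\mu$-measures gives
\[ D^{\min}_\mu(y_i,k^N(y_i))= \mu(X_\ge^N(y_{i+1},k_{\max}(y_i,k^N(y_i))))-\mu(A)\le \mu(X_\ge^N(y_{i+1},k^N(y_{i+1}))) - \mu(A) = \nu_{i+1}. \]

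For the converse, suppose $D^{\min}_\mu(y_i,k^N(y_i))<\nu_{i+1}$ for every $i$. Writing $k^*:=k_{\max}(y_i,k^N(y_i))$, the definition of $D^{\min}_\mu$ together with the mass prescription for $k^N(y_{i+1})$ yields
\[ \mu(X_\ge^N(y_{i+1},k^*)) = \mu(A) + D^{\min}_\mu(y_i,k^N(y_i)) < \mu(A)+\nu_{i+1}= \mu(X_\ge^N(y_{i+1},k^N(y_{i+1}))), \]
and monotonicity of $\mu(X_\ge^N(y_{i+1},\cdot))$ then forces $k^N(y_{i+1})<k^*$. For each $x\in A$ we have $c(x,y_{i+2})-c(x,y_{i+1})\ge k^*>k^N(y_{i+1})$, so $x\in X_>^N(y_{i+1},k^N(y_{i+1}))$, which is exactly the discrete nestedness condition.

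The main subtlety I anticipate is the asymmetry between strict and non-strict inequalities. In the forward direction, the pointwise strict bound coming from membership in $X_>^N$ degrades to a non-strict bound $k_{\max}\ge k^N(y_{i+1})$ after taking the infimum, which is nonetheless enough for the weak inequality $D^{\min}_\mu\le \nu_{i+1}$. Conversely, the strict slack in the measure inequality is precisely what promotes $k^N(y_{i+1})<k^*$ to the pointwise strict inequality defining $X_>^N$, explaining why the converse direction must assume the sharp hypothesis $D^{\min}_\mu<\nu_{i+1}$ rather than the weak one.
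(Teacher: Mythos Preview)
Your proof is correct and follows essentially the same approach as the paper's: both directions hinge on comparing $k^N(y_{i+1})$ with $k_{\max}(y_i,k^N(y_i))$ and then invoking monotonicity of $k\mapsto \mu(X_\ge^N(y_{i+1},k))$. Your explicit rewriting of $k_{\max}$ as $\inf_{x\in A}\big(c(x,y_{i+2})-c(x,y_{i+1})\big)$ and your direct argument for the converse (rather than the paper's contrapositive) are cosmetic rephrasings of the same idea.
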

\begin{proof}
    Assume that $(c,\mu,\nu) $ is discretely nested. Then, for all $1\le i\le N-2$ we have $X_{\ge}^N(y_{i},k^N(y_{i}))\subset X_{>}^N(y_{i+1},k^N(y_{i+1}))\subset X_{\ge}^N(y_{i+1},k^N(y_{i+1})) $ which implies $k^N(y_{i+1})\le k_{\max}(y_i,k^N(y_i)).$ Hence,
    \[\begin{array}{ll}
    D^{\min}_{\mu}(y_i,k^N(y_i))&=\mu(X_\ge ^N(y_{i+1},k_{\max}(y_i,k^N(y_i)))\setminus X_\ge ^N(y_{i},k^N(y_i)))\vspace{1pt}\\
    &\le\mu(X_\ge ^N(y_{i+1},k^N(y_{i+1}))\setminus X_\ge ^N(y_{i},k^N(y_i))\vspace{1pt}\\
    &=\mu(X^N_{\ge}(y_{i+1},k^N(y_{i+1})))-\mu(X^N_{\ge}(y_{i},k^N(y_{i})))=\nu_{i+1}
    \end{array}\]
    for all $1\le i\le N-2.$

    Assume that $D^{\min}_{\mu}(y_i,k^N(y_i))<\nu_{i+1}$ for all $1\le i\le N-2.$ If $(c,\mu,\nu)$ is not discretely nested, then there exists $1\le j\le N-2$ such that $X_{\ge}^N(y_{j},k^N(y_{j}))\not\subset X_{>}^N(y_{j+1},k^N(y_{j+1})).$ Thus, $k^N(y_{j+1})\ge k_{\max}(y_j,k^N(y_j))$ and
    \[\mu(X_\ge ^N(y_{j+1},k^N(y_{j+1}))\setminus X_\ge ^N(y_{j},k^N(y_j)))\le D^{\min}_{\mu}(y_j,k^N(y_j)).\] But,
   \[\begin{array}{ll}
   \nu_{j+1}&=\mu(X^N_{\ge}(y_{j+1},k^N(y_{j+1})))-\mu(X^N_{\ge}(y_{j},k^N(y_{j})))\vspace{1pt}\\
   &\le \mu(X_\ge ^N(y_{j+1},k^N(y_{j+1}))\setminus X_\ge ^N(y_{j},k^N(y_j))\le D^{\min}_{\mu}(y_j,k^N(y_j))
   \end{array}\]
   which is a contradiction.
\end{proof}
The minimal mass condition in Theorem~\ref{nested theorem}  yields a verifiable sufficient condition for discrete nestedness that does not require prior knowledge of the splitting levels $k^N(y_i)$. In particular, by uniformly bounding the minimal mass difference across all splitting levels, we obtain the following corollary.

\begin{cor}\label{lowerbound}
    If for all $1\le i\le N-2$ we have  $\sup_{k\in \mathbb{R}}D^{\min}_{\mu}(y_i,k)-\nu_{i+1}<0,$
    then $(c,\mu,\nu)$ is discretely nested.
\end{cor}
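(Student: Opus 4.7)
The corollary is essentially an immediate specialization of the sufficient direction of Theorem~\ref{nested theorem}, so the plan is short. The hypothesis gives, for each $1 \le i \le N-2$, the strict inequality $\sup_{k \in \mathbb{R}} D^{\min}_{\mu}(y_i,k) < \nu_{i+1}$, which in particular applies to any one specific choice of $k$.

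First I would note that the splitting levels $k^N(y_i)$, being chosen so that $\mu(X_{\ge}^N(y_i, k^N(y_i))) = \sum_{j=1}^{i} \nu_j$, are real numbers (they exist by continuity of $k \mapsto \mu(X_{\ge}^N(y_i, k))$, which in turn follows from hypothesis \ref{nonzero1} together with our standing assumption $\mu(X_=^N(y_i,k))=0$). Then for each $1 \le i \le N-2$, instantiating the supremum at $k = k^N(y_i) \in \mathbb{R}$ yields
\[
D^{\min}_{\mu}(y_i, k^N(y_i)) \le \sup_{k \in \mathbb{R}} D^{\min}_{\mu}(y_i,k) < \nu_{i+1}.
\]

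Finally I would invoke the converse (sufficient) statement in Theorem~\ref{nested theorem}: since $D^{\min}_{\mu}(y_i, k^N(y_i)) < \nu_{i+1}$ for all $1 \le i \le N-2$, the triple $(c,\mu,\nu)$ is discretely nested. This completes the proof.

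The main point worth emphasizing in the write-up is conceptual rather than technical: the value of the corollary lies in removing the dependence on the a priori unknown levels $k^N(y_i)$, replacing a condition at specific (solution-dependent) thresholds by a uniform bound that can be checked directly from $c$, $\mu$, and $\nu$. There is no real obstacle in the argument itself.
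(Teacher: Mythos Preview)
Your proof is correct and follows essentially the same route as the paper: instantiate the uniform bound at the specific level $k = k^N(y_i)$ and then invoke the sufficient direction of Theorem~\ref{nested theorem}. The only addition on your end is the remark on existence of $k^N(y_i)$, which is harmless and consistent with the paper's standing assumptions.
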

\begin{proof}
    The condition $\sup_{k\in \mathbb{R}}D^{\min}_{\mu}(y_i,k)-\nu_{i+1}<0$ implies that $D^{\min}_{\mu}(y_i,k)-\nu_{i+1}<0$ for all $k$ in particular 
    \(D^{\min}_{\mu}(y_i,k^N(y_i))-\nu_{i+1}<0\) for all $1\le i \le N-2$ and by Theorem \ref{nested theorem} we get discrete nestedness.
\end{proof}

\begin{exm}  \label{example1}
Let $\mu$ be the uniform measure on $X = (0,1)^2$ with $c(x, y_i) = -x_1 y_i - x_2 F(y_i)$ for all  $x=(x_1,x_2)\in X$ and $y_i \in Y$, where $F$ is an increasing convex function. In this case, the set $X_=^N(y_i, k)$ is a line orthogonal to the vector $\langle y_{i+1} - y_i, F(y_{i+1}) - F(y_i) \rangle$.  

For a fixed $k_0$, the set $X_{\ge}^N(y_{i+1}, k_{\max}(y_i, k_0)) \setminus X_{\ge}^N(y_i, k_0)$ is contained within a triangle whose vertices are given by the intersections of $X_=^N(y_i, k_0)$, $X_=^N(y_{i+1}, k_{\max}(y_i, k_0))$, and the $x_1$-axis. It follows that  
\[
D_\mu^{\min}(y_i, k_0) \leq \sup_{k \in \mathbb{R}} D_\mu^{\min}(y_i, k) \le \frac{1}{2} \left( \frac{F(y_{i+2}) - F(y_{i+1})}{y_{i+2} - y_{i+1}} - \frac{F(y_{i+1}) - F(y_i)}{y_{i+1} - y_i} \right).
\]  
This supremum is bounded above by the area of the triangle formed by the intersection of $X_=^N(y_i, k)$, $X_=^N(y_{i+1}, k_{\max}(y_i, k))$, and the $x_1$-axis for some $k$ such that the intersection of $X_=^N(y_i, k)$ and $X_=^N(y_{i+1}, k_{\max}(y_i, k))$ lies on the line $x_2 = 1$.  

By Corollary \ref{lowerbound}, the problem $(c,\mu,\nu)$ is discretely nested whenever 
\[\sup_{k \in \mathbb{R}} D_\mu^{\min}(y_i, k)\le\frac{1}{2} \left( \frac{F(y_{i+2}) - F(y_{i+1})}{y_{i+2} - y_{i+1}} - \frac{F(y_{i+1}) - F(y_i)}{y_{i+1} - y_i} \right)<\nu_{i+1}\] for all $1\le i\le N-2.$
\end{exm}

\section{Internal energy}\label{internal energy}

As above, we take $Y=\{y_1,...,y_N\}$ to be discrete throughout this section.  We consider variational problems of the form
\begin{equation}\label{cong}
\min_{\nu \in \mathcal{P}(Y)} \left\{ \mathcal{W}_c(\mu, \nu) + \mathcal{F}(\nu) \right\},
\end{equation}
for a fixed $ \mu \in \mathcal{P}(X) $, where $ \mathcal{F}(\nu) = \sum_{i=1}^N f(\nu_i)\nu_i $ and $ f : [0, \infty) \to \mathbb{R} $ is continuously differentiable on $ (0, \infty) $, strictly convex, has superlinear growth, and satisfies $ \lim_{s \to 0^+} f'(s) = -\infty $. These conditions on $ f $ guarantee the existence  and uniqueness  of a minimizer. Moreover, the strict convexity of $ f $ implies that for each fixed $v$ the function
\(
s \mapsto \sum_{i=1}^N (f')^{-1}(s - v)
\)
is strictly increasing, and therefore admits an inverse, which we denote by $ J_v $.
Our work in this section closely follows the corresponding analysis in the continuous case in Section 3.1 of \cite{nenna2020variational}.

\subsection{Discrete nestedness of the solutions}
To proceed with our analysis, we first obtain  bounds on the optimal weights $\nu_i$. In particular, the following proposition provides lower and upper estimates on each $\nu_i$ in terms of $f,$ the geometry of  $Y$ and the cost function $c$.

\begin{prop}\label{bounds}
    The minimizer $\nu$ of \eqref{cong} satisfies
    \[(f')^{-1}(J_{-M_c|y_i-y_1|}(1)-M_c|y_i-y_1|)\le \nu_i\le (f')^{-1}(J_{M_c|y_i-y_1|}(1)+M_c|y_i-y_1|)\] where $M_c=\sup_{(x,y_j,y_k)}\frac{|c(x,y_j)-c(x,y_k)|}{|(x,y_j)-(x,y_k)|}$ and $|\cdot|$ is the Euclidean norm.
\end{prop}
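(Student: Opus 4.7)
The plan is to characterize the minimizer via first-order conditions, bound the optimal Kantorovich potential $v$ using $M_c$, and then propagate these bounds into bounds on $\nu_i$ using the monotonicity of $(f')^{-1}$ and of the map $v \mapsto J_v(1)$.

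First, I would set up the Euler-Lagrange equation. The growth and slope hypotheses on $f$ force any minimizer $\nu$ of~\eqref{cong} to be interior to the probability simplex, since $f'(0^+)=-\infty$ would otherwise allow a small mass transfer that strictly decreases the objective. Since $\nu\mapsto\mathcal{W}_c(\mu,\nu)$ is concave and differentiable on the interior with $\partial_{\nu_i}\mathcal{W}_c(\mu,\nu)=v_i$ (a standard semi-discrete identity; see, e.g., \cite{merigot2021optimal}), introducing a Lagrange multiplier $\lambda$ for the constraint $\sum_i \nu_i=1$ yields
\[
f'(\nu_i)+v_i=\lambda \quad \text{for all } i,
\]
where $(u,v)$ is an optimal $c$-concave dual pair for $\mathcal{W}_c(\mu,\nu)$. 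Solving gives $\nu_i=(f')^{-1}(\lambda-v_i)$, and enforcing $\sum_i\nu_i=1$ together with the definition of $J_v$ identifies $\lambda=J_v(1)$.

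Next I would produce a Lipschitz bound on $v$. Using $c$-concavity, $v_i=\inf_{x\in X}\{c(x,y_i)-u(x)\}$, and evaluating each infimum at the other's minimizer gives $|v_i-v_1|\le \sup_{x\in X}|c(x,y_i)-c(x,y_1)|\le M_c|y_i-y_1|$ directly from the definition of $M_c$. Since $(u,v)$ is determined only up to an additive constant which $\lambda$ absorbs, I may normalize $v_1=0$, yielding $-M_c|y_i-y_1|\le v_i\le M_c|y_i-y_1|$ for every $i$.

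The map $v\mapsto J_v(1)$ is componentwise monotone: since $(f')^{-1}$ is strictly increasing, raising any $v_j$ decreases $\sum_k (f')^{-1}(s-v_k)$ at fixed $s$, so $s$ must increase to preserve the value $1$. Comparing $v$ componentwise with the two envelopes $(\pm M_c|y_j-y_1|)_{j=1}^N$ (the vectors underlying the proposition's shorthand $J_{\pm M_c|y_i-y_1|}$) gives
\[
J_{-M_c|y_i-y_1|}(1)\le\lambda\le J_{M_c|y_i-y_1|}(1).
\]
Combining these bounds with $\nu_i=(f')^{-1}(\lambda-v_i)$ and the monotonicity of $(f')^{-1}$ immediately yields the claimed two-sided inequality. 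The main obstacle is the semi-discrete envelope identity $\partial_{\nu_i}\mathcal{W}_c(\mu,\nu)=v_i$ and the justification that the derivative may be taken unconstrained on the interior of the simplex; once this is in place the rest is essentially algebraic bookkeeping with the monotonicity of $(f')^{-1}$ and of $J_v$.
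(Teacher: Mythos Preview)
Your approach is essentially the same as the paper's: derive the first-order condition $v_i+f'(\nu_i)=C$, identify $C=J_v(1)$ from $\sum_i\nu_i=1$, bound $|v_i|\le M_c|y_i-y_1|$ via the Lipschitz property of the $c$-concave potential (the paper cites \cite{McCann2001} for this, you argue it directly), and then propagate through the monotonicity of $(f')^{-1}$ and of $v\mapsto J_v(1)$. One minor slip: $\nu\mapsto\mathcal{W}_c(\mu,\nu)$ is \emph{convex}, not concave (it is a supremum of affine functions of $\nu$ by duality), though this does not affect your argument since the envelope identity $\partial_{\nu_i}\mathcal{W}_c(\mu,\nu)=v_i$ is what you actually use.
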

\begin{proof}
    Let $\nu$ be the minimizer of \eqref{cong} and let $v=(v_i)_{i=1}^N$ such that $(u,v)$ is the solution to the  Kantorovich dual problem \eqref{eqn: OT dual} between the source measure $\mu$ on $X$ and the target measure $\nu = \sum_{i=1}^N \nu_i \delta_{y_i}$.


Without loss of generality, we normalize by taking $v_1 = 0$.
 From \cite{blanchet2016optimal}, we get the first order optimality condition of \eqref{cong} which is 
    \begin{equation}\label{optcon}
        v_i+f'(\nu_i)=C,
    \end{equation} for some constant $C$.    From \eqref{optcon}, we get  $\nu_i=(f')^{-1}(C-v_i)$ and using the fact that $\nu$ is a probability measure we deduce $1=\sum_{i=1}^N\nu_i=\sum_{i=1}^N(f')^{-1}(C-v_i)$  and so $J_v(1)=C.$

    From \cite{McCann2001} we know that $v$ is a Lipschitz function with the constant $M_c=\sup_{(x,y_j,y_k)}\frac{|c(x,y_j)-c(x,y_k)|}{|(x,y_j)-(x,y_k)|}$ where $|\cdot|$ is the Euclidean norm, thus $-M_c|y_i-y_1|\le v_i\le M_c|y_i-y_1|.$ By the monotonicity of $(f')^{-1},$ we get 
    \[\sum_{i=1}^N(f')^{-1}(C-M_c|y_i-y_1|)\le \sum_{i=1}^N(f')^{-1}(C-v_i)=1\le \sum_{i=1}^N(f')^{-1}(C+M_c|y_i-y_1|).\] Apply $J_{-M_c|y_i-y_1|}$ to $1\le\sum_{i=1}^N(f')^{-1}(C+M_c|y_i-y_1|)$ to get 
    $J_{-M_c|y_i-y_1|}(1)\le C.$ Similarly, we get  $C\le J_{M_c|y_i-y_1|}(1).$
    Therefore, 
    \[(f')^{-1}(J_{-M_c|y_i-y_1|}(1)-M_c|y_i-y_1|)\le (f')^{-1}(C-v_i)=\nu_i\le (f')^{-1}(J_{M_c|y_i-y_1|}(1)+M_c|y_i-y_1|).\]
\end{proof}
Using the lower bound on the components of $\nu$ from Proposition~\ref{bounds}, we now derive a concrete sufficient condition ensuring that  $(c,\mu,\nu)$ is discretely nested. This is done by inserting the lower estimate into the mass comparison criterion of Corollary~\ref{lowerbound}.

\begin{thm}\label{suffcon}

Assume that 
    \[\sup_{k\in\mathbb{R}} D_{\mu}^{\min}(y_i,k)-(f')^{-1}(J_{-M_c|y_{i+1}-y_1|}(1)-M_c|y_{i+1}-y_1|)<0\] for all $1\le i\le N-2.$
  Then, letting $\nu$ be the minimizer of \eqref{cong}, $(c,\mu,\nu )$ is discretely nested.
\end{thm}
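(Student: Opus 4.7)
The statement is essentially a direct combination of Proposition~\ref{bounds} and Corollary~\ref{lowerbound}, so my plan is to chain these two facts together with a short monotonicity argument.

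First, I would invoke Proposition~\ref{bounds} applied at the index $i+1$: the minimizer $\nu$ of \eqref{cong} satisfies
\[
\nu_{i+1}\ \ge\ (f')^{-1}\bigl(J_{-M_c|y_{i+1}-y_1|}(1)-M_c|y_{i+1}-y_1|\bigr)
\]
for every $1\le i\le N-2$. This is the only place the specific structure of the variational problem (existence of a dual vector $v$, Lipschitz control by $M_c$, and the identity $\nu_i=(f')^{-1}(C-v_i)$) enters; once we have it, the argument is purely a comparison.

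Next, I would plug this lower bound into the hypothesis. The assumption
\[
\sup_{k\in\mathbb{R}} D_{\mu}^{\min}(y_i,k)-(f')^{-1}\bigl(J_{-M_c|y_{i+1}-y_1|}(1)-M_c|y_{i+1}-y_1|\bigr)<0
\]
gives, after rearranging and using the lower bound above,
\[
\sup_{k\in\mathbb{R}} D_{\mu}^{\min}(y_i,k)\ <\ (f')^{-1}\bigl(J_{-M_c|y_{i+1}-y_1|}(1)-M_c|y_{i+1}-y_1|\bigr)\ \le\ \nu_{i+1},
\]
so in particular $\sup_{k\in\mathbb{R}} D_{\mu}^{\min}(y_i,k)-\nu_{i+1}<0$ for each $1\le i\le N-2$.

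Finally, this is exactly the hypothesis of Corollary~\ref{lowerbound}, which then yields that $(c,\mu,\nu)$ is discretely nested, completing the proof. There is no real obstacle here: everything is verifiable once the lower bound of Proposition~\ref{bounds} is in hand, and the only minor care needed is to make sure we are matching indices correctly ($\nu_{i+1}$ on the right-hand side of the comparison, consistent with both Corollary~\ref{lowerbound} and the statement being proved).
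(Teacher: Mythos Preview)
Your proposal is correct and follows exactly the approach of the paper, which simply states that the result follows from Corollary~\ref{lowerbound} and Proposition~\ref{bounds}. Your write-up just makes explicit the chaining of the lower bound on $\nu_{i+1}$ from Proposition~\ref{bounds} with the hypothesis of Corollary~\ref{lowerbound}, including the index matching.
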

\begin{proof}
   The proof follows from Corollary \ref{lowerbound} and Proposition \ref{bounds}.
\end{proof}
The following example illustrates how the preceding result can be used to ensure nestedness of the solution on a concrete example.

\begin{exm}\label{example suffic}

    Going back to Example \ref{example1}, when  $\mu$ is the uniform measure on $X = (0,1)^2$ with $c(x, y_i) = -x_1 y_i - x_2 F(y_i)$ for all $y_i \in Y$ with $0=y_1\le y_i<y_{i+1}$, and $F$ is an increasing convex function, let $f(s)=s\ln(s).$ In this case, apart from an irrelevant additive constant we get $f'(s)=\ln(s)$ and $J_v$ is the inverse of $s\mapsto \sum_{i=1}^Ne^{s-v_i}=e^s\sum_{i=1}^Ne^{-v_i}$ where $v_i$ is the Kantorovich potential. Hence, $J_v(1)=\ln\left(\frac{1}{\sum_{i=1}^Ne^{-v_i}}\right).$ Also, we have  $M_c=1,$ and by Proposition \ref{bounds}, we get 
    \[\nu_{i+1}\ge e^{\ln\left(\frac{1}{\sum_{p=1}^Ne^{y_{i+1}}}\right)-y_{i+1}}=\frac{e^{-y_{i+1}}}{Ne^{y_{i+1}}}.\] By Theorem \ref{suffcon}, the problem $(c,\mu,\nu)$ is nested where $\nu$ is the minimizer whenever
    \[\begin{array}{ll}
    \sup_{k\in \mathbb{R}}D_\mu^{\min}(y_i,k)\le\frac{1}{2} \left( \frac{F(y_{i+2}) - F(y_{i+1})}{y_{i+2} - y_{i+1}} - \frac{F(y_{i+1}) - F(y_i)}{y_{i+1} - y_i} \right)< \frac{1}{Ne^{2y_{i+1}}}
    \end{array}\]
    for all $1\le i\le N-2.$ When $F(y)=\frac{y^2}{A}$ for some constant $A$ and $y_i=\frac{i}{N},$ we have $\sup_{k\in \mathbb{R}}D_\mu^{\min}(y_i,k)\le\frac{1}{AN}$ which implies that  the problem $(c,\mu,\nu)$ is discretely nested when $A>e^{2}.$  
\end{exm}

 The structure described above admits a meaningful interpretation in practical decision-making contexts. Inspired by the holiday choice in Section 2.1 of \cite{blanchet2014Nash}, consider, for instance, a population of individuals (represented by the measure $\mu$) choosing among a finite set of national parks, indexed by $y_i \in Y$. Each park is characterized by a scalar attribute $y_i$, which quantifies its scenic quality—reflecting features such as landscape diversity, iconic viewpoints, and biodiversity. We can interpret $F(y_i)$ as the (normalized) quality of visitor-facing utilities and services at the park, such as accessibility of trailheads, availability and reliability of shuttle services, cleanliness and density of restrooms, potable water points, and safety infrastructure. Empirically, these utilities tend to improve with scenic quality, and often do so in an accelerating fashion—top-tier scenic parks attract disproportionately larger investments in visitor infrastructure.

Each individual is described by a vector $x = (x_1, x_2)$, where $x_1$ represents their preference for scenic quality, and $x_2$ encodes their preference for high-quality visitor utilities. The cost function $c(x, y_i) = -x_1 y_i - x_2 F(y_i)$ then encodes preferences that weigh the park’s natural scenery (through $y_i$) against the quality of its visitor amenities (through $F(y_i)$). The allocation of individuals across parks arises as a Cournot--Nash equilibrium~\cite{blanchet2016optimal}: each visitor chooses the park that maximizes their own utility while considering the overall distribution of other visitors. Congestion effects, modeled by the term $f(s) = s \ln s$, capture the population’s aversion to overcrowding—visitors prefer not to choose parks that are already popular and heavily frequented by others.




\subsection{Numerical Algorithm}\label{DiscContinuous}
In this section, we present a numerical algorithm for computing the solution of~\eqref{cong} under the assumption of discrete nestedness. We first present the numerical formulation of the problem introduced earlier. 

\begin{defn}[Laguerre Cells and Tessellation]
    Given a finite set of points $\{y_i\}_{i=1}^N \subset Y$, a weight vector $\mathbf{v} = (v_i)_{i=1}^N \in \mathbb{R}^N$, and a cost function $c : X \times Y \to \mathbb{R}_{\geq 0}$, the \emph{Laguerre cell} associated with $y_i$ is defined as
    \[
    \text{Lag}_i(\mathbf{v}) := \left\{ x \in X \ \middle| \ c(x, y_i) - v_i \leq c(x, y_j) - v_j,\ \forall j \neq i \right\}.
    \]
    The collection of cells $\{\text{Lag}_i(\mathbf{v})\}_{i=1}^N$ constitutes a \emph{Laguerre tessellation} of the domain $X$. Furthermore, the tessellation is said to be \emph{nested} if each Laguerre cell $\text{Lag}_i(\mathbf{v})$ shares a boundary with at most two adjacent cells, namely $\text{Lag}_{i-1}(\mathbf{v})$ and $\text{Lag}_{i+1}(\mathbf{v})$, for all $1 < i < N$.
\end{defn}

\begin{prob}\
    Given a set of points $\{y_i\}_{i=1}^N$, a probability measure $\mu(x)$ on the domain $X$, and a cost function $c$, we seek to solve the following system of equations:
    \begin{gather}\label{Prob1}
        \mu\left(\text{Lag}_i(\mathbf{v})\right) - (f')^{-1}(C - v_i) = 0, \quad i = 1, 2, \dots, N.
    \end{gather}
    In particular, if $f(z) = z \log z$, then $(f')^{-1}(z) = e^{z}$. Substituting this expression into the system~\eqref{Prob1} yields:
    \begin{gather}\label{Prob2}
        \mu\left(\text{Lag}_i(\mathbf{v})\right) - e^{C - v_i} = 0, \quad i = 1, 2, \dots, N.
    \end{gather}
\end{prob}
\begin{rem}
    To satisfy the compatibility condition in~\eqref{Prob2}, the following normalization must hold:
    \begin{gather*}
        \sum_{i=1}^N \mu\left(\text{Lag}_i(\mathbf{v})\right) = 1 \quad \Longrightarrow \quad \sum_{i=1}^N e^{C - v_i} = 1.
    \end{gather*}
    One approach to enforce this condition is to determine the value of $C$ that satisfies the constraint, and then solve the resulting system of equations. Specifically, $C$ is given by:
    \begin{gather*}
        C = \log\left( \frac{1}{\sum_{i=1}^N e^{-v_i}} \right).
    \end{gather*}
    Substituting this expression into the system~\eqref{Prob2} yields the following:
    \begin{gather}\label{Prob3}
        \mu\left(\text{Lag}_i(\mathbf{v})\right) - \frac{e^{-v_i}}{\sum_{k=1}^N e^{-v_k}} = 0, \quad i = 1, 2, \dots, N.
    \end{gather}
\end{rem}

With the problem reformulated in a form amenable to numerical computation, we adopt Newton's method as a baseline for solving the system in~\eqref{Prob3}. Known for its fast local convergence, Newton’s method is a natural first choice for root-finding problems. However, it suffers from stability and global convergence issues, which motivate the exploration of alternative methods. Additionally, each iteration requires solving a dense linear system, leading to a per-iteration cost of $\mathcal{O}(N^3)$ and an overall complexity of up to $\mathcal{O}(N^4)$ in practice, which can be prohibitive for large $N$. 
We include standard Newton’s method here both to clarify its implementation and to provide a benchmark for comparison.

\subsubsection{Standard Newton's Method}
In our examples, we utilized the Centered Finite Difference scheme to compute the Jacobian matrix, and used the zero vector, $\mathbf{v^{0}} = \mathbf{0}$, as the initial guess.
    
It is important to note that, as observed in \eqref{Prob3}, the vector $\mathbf{v}$ is unique only up to scalar addition. Consequently, the inverse of the Jacobian matrix is computed in the orthogonal complement of its kernel, which is spanned by $\mathbf{1}$.

\begin{prob}[Standard Newton's Method]
    Consider the problem of determining $\mathbf{v}^*$, a vector in $\mathbb{R}^N$, such that $G(\mathbf{v}^*) = \mathbf{0}$ for the function $G(\mathbf{v}): \mathbb{R}^N \to \mathbb{R}^N$. The components of this function are defined by the equation
    \begin{gather}\label{eq_1}
        \{G(\mathbf{v})\}_i = \mu(\text{Lag}_i(\mathbf{v})) - \frac{e^{-v_i}}{\sum_{k=1}^N e^{-v_k}}, \quad i=1,2,\dots, N\ .
    \end{gather}
    In order to find the root of equation \eqref{eq_1}, Newton's method is used, which iteratively updates the estimate according to the formula:
    \begin{gather*}
        \mathbf{v}^{(k+1)} = \mathbf{v}^{(k)} - [\nabla G(\mathbf{v}^{(k)})]^{\dagger} G(\mathbf{v}^{(k)})\ ,
    \end{gather*}
    where $[\nabla G]^{\dagger}$ denotes the inverse of the derivative $\nabla G$ over the orthogonal complement of the kernel of $\nabla G$, specifically $\text{ker}(\nabla G) = \mathbf{1}$. 
\end{prob}

\subsubsection{Damped Newton's Method}

When the solution corresponds to a nested tessellation, we apply a damped Newton method to ensure iterates remain within the nested domain.

\begin{prob}[Damped Newton's Method]
    Starting from the current iterate $\mathbf{v}^{(k)}$, the update is given by
    \[
        \mathbf{v}^{(k+1)} = \mathbf{v}^{(k)} - \frac{1}{2^s} [\nabla G(\mathbf{v}^{(k)})]^{\dagger} G(\mathbf{v}^{(k)}),
    \]
    where \( s \in \mathbb{Z}_{\geq 0} \) is the smallest integer such that the new iterate \( \mathbf{v}^{(k+1)} \) lies inside the nested domain.

    A practical criterion for verifying the \textbf{nestedness} condition is that the triple intersections of successive Laguerre cells are empty:
    \[
        \mathrm{Lag}_{i-1}(\mathbf{v}) \cap \mathrm{Lag}_i(\mathbf{v}) \cap \mathrm{Lag}_{i+1}(\mathbf{v}) = \emptyset, \quad i=2,3,\dots, N-1.
    \]
\end{prob}

\subsubsection{Nested Methods}

Unlike Newton’s method, which seeks to solve the full nonlinear system simultaneously, the nested structure of the problem admits a sequential approach that greatly simplifies computation. This method exploits the monotonicity and ordered behavior of Laguerre cells under appropriate conditions, allowing the potentials to be computed one at a time in a forward pass. By fixing $v_1 = 0$ and constraining the range of $C$ (as discussed in Remark~\ref{c range}), the vector $\mathbf{v}$ can be constructed component-by-component, preserving the nestedness of the Laguerre tessellation at each step.

This approach is particularly effective when the solution is known or expected to lie within the nested regime, as it avoids costly Jacobian inversions and improves numerical stability. The full formulation is provided below, with pseudocode in Algorithms~\ref{Alg_ErrComp}, \ref{Alg_NestBisect}, and~\ref{Alg_NestNewt}.  Convergence guarantees are established in Appendix~\ref{convergence}.

\begin{prob}\label{Nest_prob}
    Let $H(\mathbf{v}, C) : \mathbb{R}^{N+1} \to \mathbb{R}^N$ be defined component-wise by
    \begin{gather}\label{eq_2}
        \{H(\mathbf{v}, C)\}_i = \mu\left(\text{Lag}_i(\mathbf{v})\right) - e^{C - v_i}, \quad i = 1, 2, \ldots, N.
    \end{gather}
    The goal is to find $(\mathbf{v}^*, C^*)$ such that $H(\mathbf{v}^*, C^*) = \mathbf{0}$. Due to the nested structure of the solution, this system can be solved sequentially. Fixing $v_1 = 0$ and choosing $C \in (-\infty, 0]$, we compute each successive $v_i$ by solving:
    \begin{align*}
        \{H(\mathbf{v}, C)\}_1 &= \mu\left(\text{Lag}_1(v_1, v_2)\right) - e^{C - v_1} = 0 \quad \Rightarrow \quad v_2, \\
        \{H(\mathbf{v}, C)\}_2 &= \mu\left(\text{Lag}_2(v_1, v_2, v_3)\right) - e^{C - v_2} = 0 \quad \Rightarrow \quad v_3, \\
        &\vdots \\
        \{H(\mathbf{v}, C)\}_{N-1} &= \mu\left(\text{Lag}_{N-1}(v_{N-2}, v_{N-1}, v_N)\right) - e^{C - v_{N-1}} = 0 \quad \Rightarrow \quad v_N.
    \end{align*}
    The final component defines an error function:
    \[
        \text{Error}(C) := \{H(\mathbf{v}, C)\}_N = \mu\left(\text{Lag}_N(v_{N-1}, v_N)\right) - e^{C - v_N}.
    \]
    The root $C^*$ satisfying $\text{Error}(C^*) = 0$ can be found using bisection or Newton's method. The derivative $\frac{d}{dC} \text{Error}(C)$ may be approximated using a centered finite difference scheme. Each intermediate equation $\{H(\mathbf{v}, C)\}_i = 0$, for $i = 1, \dots, N-1$, can likewise be solved using either bisection or Newton’s method on $v_{i+1}$.
\end{prob}

Before addressing key practical considerations, we first examine potential challenges in implementing the sequential approach — particularly those related to the nestedness of the Laguerre tessellation and its sensitivity to the parameter $C$. The following remarks outline these issues and suggest strategies for resolving them effectively.

\begin{rem}[Insufficiency of Mass]\label{Nest_Issue2}
    Recall the internal problem in the nested method: given a value of $C$, find $\mathbf{v}^*$ such that $v_1^* = 0$ and the following hold:
    \begin{equation*}
    \begin{aligned}
        &\mu(\text{Lag}_1(v_1^*, v_2)) = e^{C - v_1^*} \Rightarrow v_2^* \quad \text{with} \quad \mu(\text{Lag}_2(v_1^*, v_2^*)) > e^{C - v_2^*}, \\
        &\mu(\text{Lag}_2(v_1^*, v_2^*, v_3)) = e^{C - v_2^*} \Rightarrow v_3^* \quad \text{with} \quad \mu(\text{Lag}_3(v_1^*, v_2^*, v_3^*)) > e^{C - v_3^*}, \\
        &\mu(\text{Lag}_3(v_2^*, v_3^*, v_4)) = e^{C - v_3^*} \Rightarrow v_4^* \quad \text{with} \quad \mu(\text{Lag}_4(v_2^*, v_3^*, v_4^*)) > e^{C - v_4^*}, \\
        &\hspace{2em}\vdots \\
        &\mu(\text{Lag}_{N-2}(v_{N-3}^*, v_{N-2}^*, v_{N-1})) = e^{C - v_{N-2}^*} \Rightarrow v_{N-1}^* \\
        &\hspace{4em}\text{with} \quad \mu(\text{Lag}_{N-1}(v_{N-3}^*, v_{N-2}^*, v_{N-1}^*)) > e^{C - v_{N-1}^*}, \\
        &\mu(\text{Lag}_{N-1}(v_{N-2}^*, v_{N-1}^*, v_N)) = e^{C - v_{N-1}^*} \Rightarrow v_N^*,
    \end{aligned}
    \end{equation*}
    defining the residual
    \[
        \text{Error}(C) := \mu(\text{Lag}_N(v_{N-2}^*, v_{N-1}^*, v_N^*)) - e^{C - v_N^*}.
    \]
    
    The strict inequalities ensure sufficient mass remains at each step to satisfy the next constraint. If any such condition fails, the exponential terms dominate and no feasible solution exists for the given $C$. In this case, $C$ must be decreased. (This corresponds to the situation where $h(C) = -\infty$, as discussed in Appendix~\ref{convergence}.)
\end{rem}

Beyond ensuring structural properties such as nestedness, it is essential to understand the behavior of the error function as the parameter $C$ varies. This understanding is not only of theoretical interest but also plays a critical role in initializing and bounding root-finding algorithms, particularly bisection and Newton’s method. The following remark analyzes the limiting behavior of $\text{Error}(C)$ and its implications for numerical stability and algorithmic initialization.

\begin{rem}[Limiting Behavior of $C$]\label{Nest_Limit}
    Let $y_i$ denote six target points distributed along a quarter circle. Figure~\ref{fig:Err_plot} presents three plots illustrating the limiting behavior of the error function $\text{Error}(C)$. Several key observations follow:

    \begin{itemize}
        \item As shown in Figure~\ref{fig:QCircle1}, the slope of the error curve becomes steeper as the number of target points increases. This supports the hypothesis that the initial interval for bisection in solving Problem~\ref{Nest_prob} becomes increasingly sensitive with finer discretization.
    
        \item There exists an upper bound $\bar{C} < 0$ such that $\text{Error}(C)$ is undefined for all $C > \bar{C}$. This phenomenon, discussed in Remark~\ref{Nest_Issue2}, reflects the inability to construct a valid tessellation due to insufficient mass. Notably, $\bar{C}$ decreases as the number of target points increases.
    
        \item Regarding the lower limit, the error function appears to converge to 1 as $C \to -\infty$. This is due to the exponential terms vanishing in that limit, leading to $\mu(\text{Lag}_i) \to 0$ for $i = 1, \dots, N-1$, and thus:
        \[
            \text{Error}(C) = \mu(\text{Lag}_N) - e^{C - v_N} \to \mu(\text{Lag}_N) \to 1 - \sum_{i=1}^{N-1} \mu(\text{Lag}_i) \to 1.
        \]
        This asymptotic behavior is clearly visible in Figure~\ref{fig:QCircle2}.
    
        \item Similar to the upper limit, there exists a lower bound $\underline{C} < 0$ below which the solution ceases to be nested; see Figure~\ref{fig:QCircle3}. However, observing this behavior requires selecting very small values of $C$, which rarely occurs in practice. As a result, the lower bound $\underline{C}$ is typically less restrictive than the upper bound $\bar{C}$.
    \end{itemize}
\end{rem}

\begin{figure}[ht]
    \centering
    \begin{subfigure}{.32\textwidth}
        \centering
        \includegraphics[width=\linewidth]{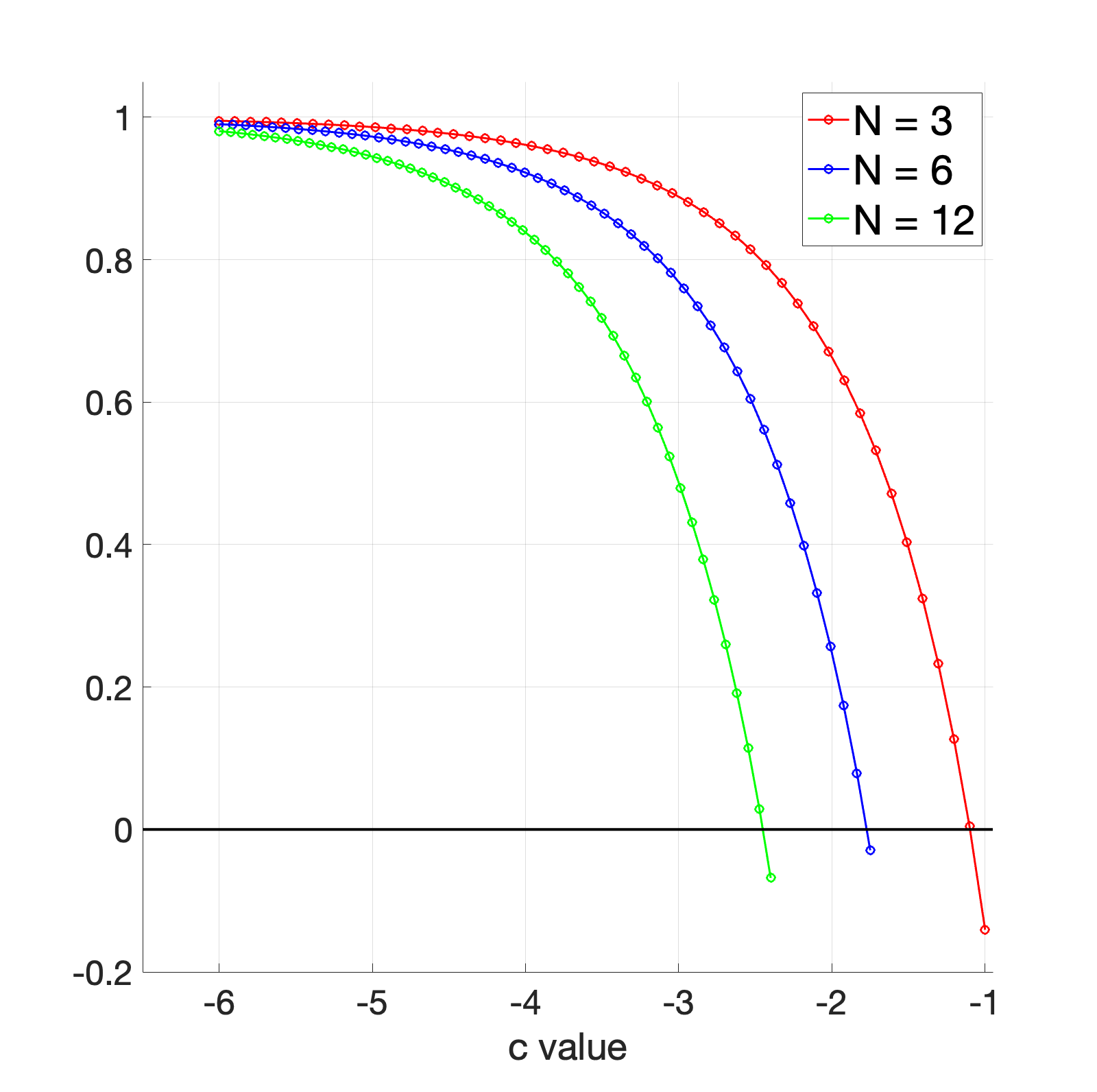}
        \caption{Error as a function of $C$}
        \label{fig:QCircle1}
    \end{subfigure}
    \begin{subfigure}{.32\textwidth}
        \centering
        \includegraphics[width=\linewidth]{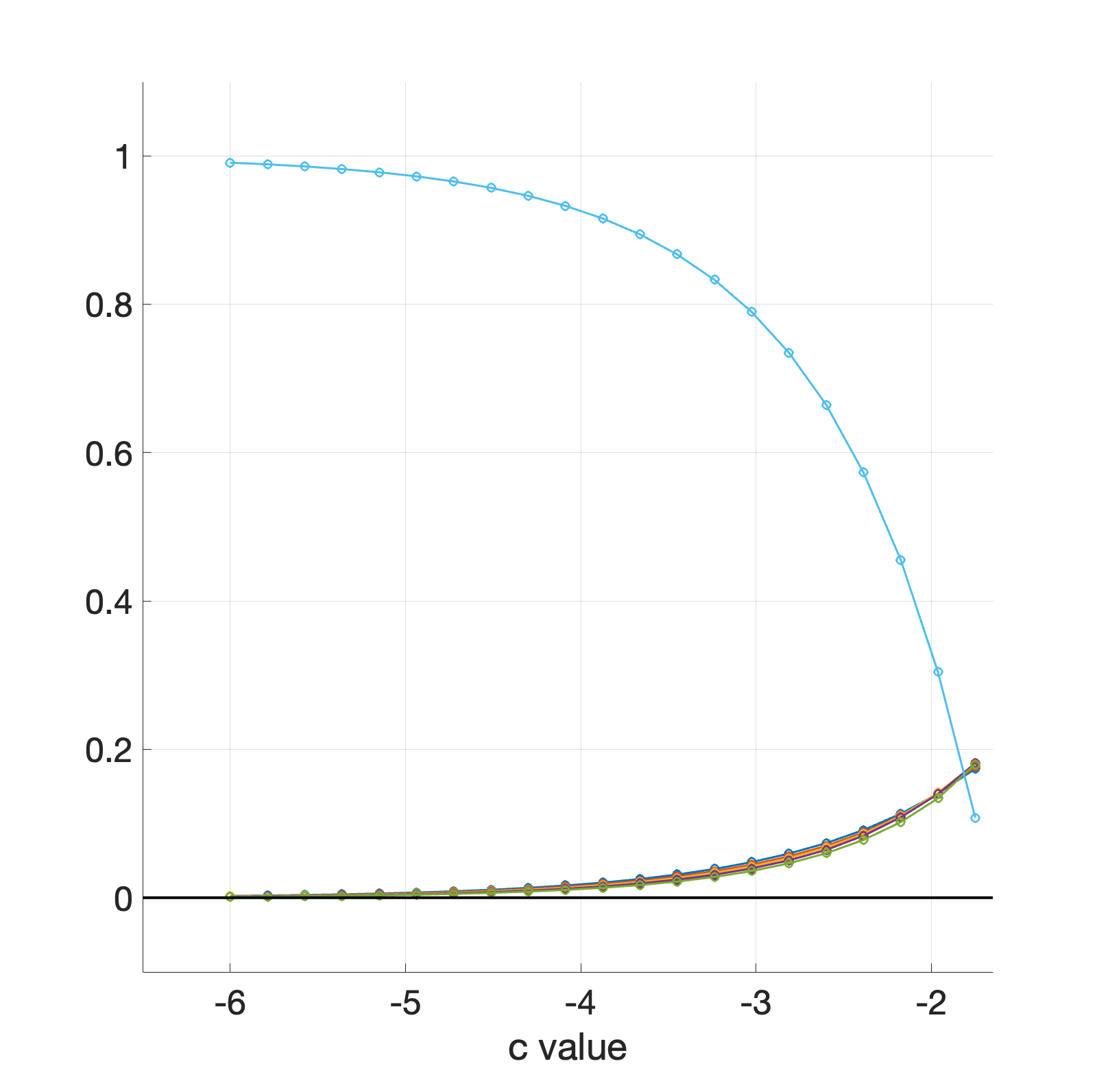}
        \caption{Measures as a function of $C$}
        \label{fig:QCircle2}
    \end{subfigure}
    \begin{subfigure}{.32\textwidth}
        \centering
        \includegraphics[width=\linewidth]{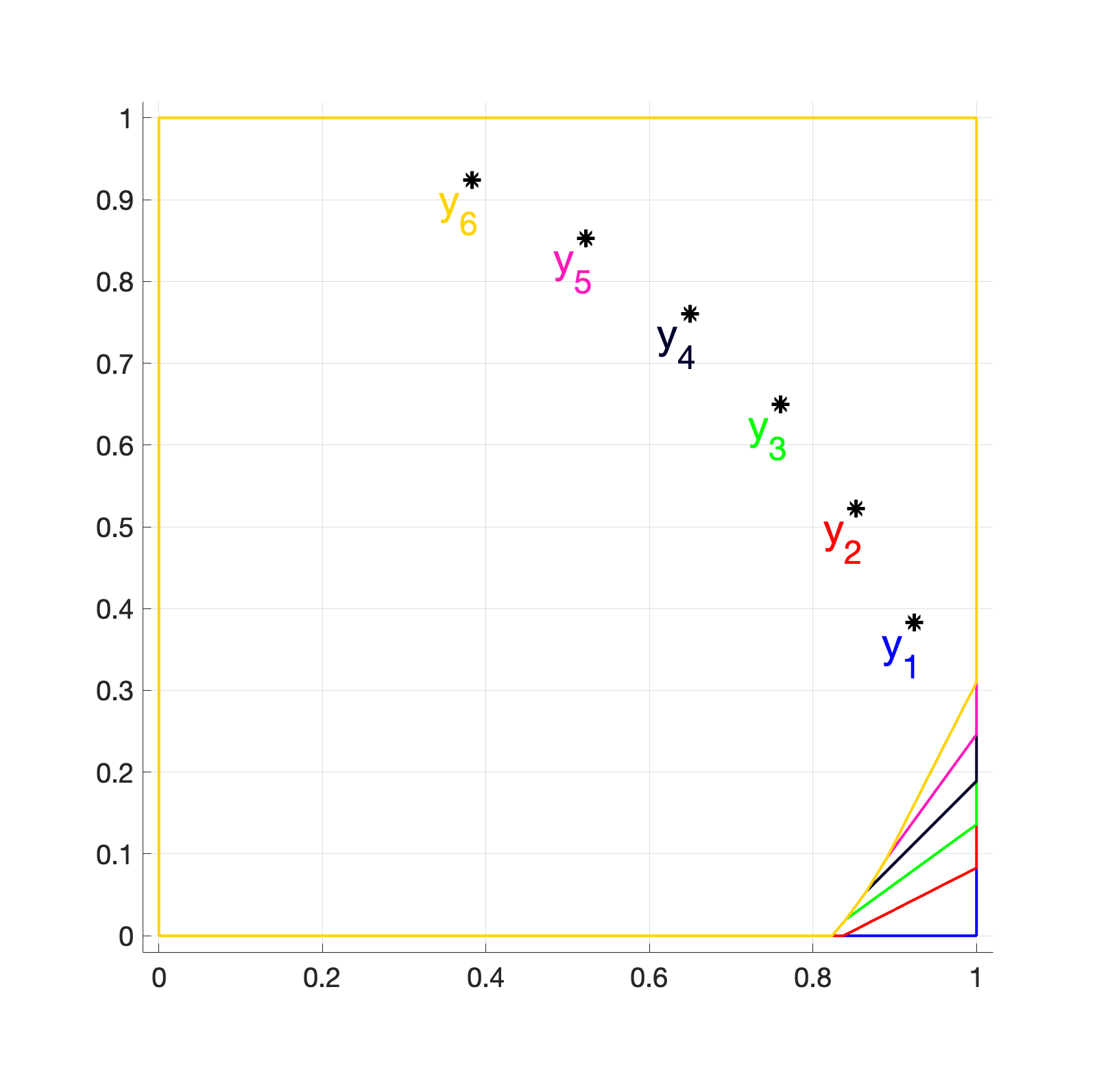}
        \caption{Laguerre cells for small $C$ value}
        \label{fig:QCircle3}
    \end{subfigure}
    \caption{Limiting behavior of the error function}
    \label{fig:Err_plot}
\end{figure}

\subsection{Numerical Examples}\label{numerical_examples}
In this section, we present numerical results that illustrate the performance of the methods described above. We compare the convergence behavior and computational efficiency of standard and damped Newton's method and  the bisection and Newton methods applied to solve the nested problem.  For the initial guesses, we used $\mathbf{v}^{0} = \mathbf{0}$ for both the Newton and Damped Newton methods. For the nested method solved using the Newton algorithm, the initial value was set to $C^0 = -5$ (or $C^0 = -7.5$ when $N = 192$). When employing the bisection method for the nested formulation, the initial interval for $C^0$ was taken as $[-5,\ 0]$ (or $[-7.5,\ 0]$ when $N = 192$).


\renewcommand\theequation{E\arabic{equation}} \setcounter{equation}{0}

We consider the following examples: 
\begin{align}
    &\text{Straight line:} \quad y(t) = \icol{t \\ t}, \quad t \in \left[\frac{1}{10}, \frac{9}{10}\right], \label{E1} \\
    &\text{Scaled parabola:} \quad y(t) = \icol{t \\ \left(\frac{t}{e}\right)^2}, \quad t \in [0, 1], \label{E2}\\
    &\text{Quarter circle:}\quad y(t) = \icol{\cos(t)\\\sin(t)}, \quad t\in \bigg{[}\frac{\pi}{8}, \frac{3\pi}{8}\bigg{]}, \label{E3}\\
    &\text{Parabola:} \quad y(t) = \icol{t\\t^2}, \quad t\in \bigg{[}\frac{1}{N+1}, \frac{N}{N+1}\bigg{]}\label{E4}
\end{align}
\renewcommand\theequation{\arabic{equation}} \setcounter{equation}{9}
where we fix the cost function to be $c(x,y)=\|x-y\|_2^2.$

For a given value of $N$, the target points are placed equidistantly over the respective parameter intervals.

It is well known that the solution to Example~\eqref{E1} is always nested. Moreover, as demonstrated in Example~\ref{example suffic}, the solution to Example~\eqref{E2} with a uniform measure is also nested.
Tables~\ref{tab:Unif} and \ref{tab:NonUnif} report the computation time and number of iterations for each method, along with the number of damping steps applied.

\begin{table}[ht]
    \centering\scalebox{0.6}{\setstretch{2.5}
    \begin{tabular}{||c||c|c|c|c|c|c|c||}\hline\hline
        \textbf{Number of Target Points:} & $\mathbf{N=3}$ & $\mathbf{N=6}$ & $\mathbf{N=12}$ & $\mathbf{N=24}$ & $\mathbf{N=48}$ & $\mathbf{N=96}$ & $\mathbf{N=192}$\\\hline\hline
        
        \multicolumn{8}{c}{\Large \textbf{Example \eqref{E1}}}\vspace{0.25em}\\\cline{1-8}\cline{1-8}
        \textbf{Solution}               & $C = -1.1532$      & $C = -1.8491$      & $C = -2.531$       & $C = -3.2152$ & $C = -3.9028$  & $C = -4.5928$ & $C= -5.2842$ \\\hline
        \textbf{Standard Newton}        & $ 1.2091$ (3)      &  $ 3.4725$ (3)     &  $ 14.082$ (3)     & \cellcolor{green!25} $54.024$ (3) & \cellcolor{green!25} $259.97$ (3) & $1010.6$ (3)& $8235.2$ (3) \\\hline
        \textbf{Damped Newton}          & \cellcolor{green!25} $1.1721$ (3, 0)  &  \cellcolor{green!25} $3.3883$ (3, 0) &  \cellcolor{green!25} $13.796$ (3, 0) & $54.578$ (3, 0) & $260.45$ (3, 0) & \cellcolor{green!25} $1005.1$ (3,0) & \cellcolor{green!25} $8187.2$ (3,0) \\\hline
        \textbf{Nested Bisection}       & $30.803$ (16)      & $180.07$ (18)      & $393.44$ (18)      & $971.6$ (17) & $2168.8$ (16) & $4804.6$ (18) & $10317$ (18) \\\hline
        \textbf{Nested Newton} & $33.221$ (5)       &  $137.06$ (5)      &  $400.42$ (6)      & $887.59$ (5) & $2133$ (5) & $3896.6$ (4) & $9805.5$ (6) \\\hline
        \hline

        \multicolumn{8}{c}{\Large \textbf{Example \eqref{E2}}}\vspace{0.25em}\\\cline{1-8}\cline{1-8}
        \textbf{Solution}               & $C = -1.1609$    & $C = -1.8478$    & $C = -2.5315$   & $C = -3.2188$     & $C = -3.2188$     & $C = -4.6002$ & $C = -5.2925$ \\\hline
        \textbf{Standard Newton}        & \cellcolor{green!25} $0.5628$ (2)     & $1.9745$ (2)     &  $10.525$ (3)   & $48.172$ (3)      & \cellcolor{green!25} $345.57$ (4)      & \cellcolor{red!25} NAN & $18361$ (5) \\\hline
        \textbf{Damped Newton}          & $0.6491$ (2, 0)  & \cellcolor{green!25} $1.9734$ (2, 0)  & \cellcolor{green!25} $9.8136$ (3, 0) & \cellcolor{green!25} $48.102$ (3,0)    & $352.84$ (4, 0)   & $1974.5$ (4, 0) & $18381$ (5, 0) \\\hline
        \textbf{Nested Bisection}       & $27.489$ (18)    & $61.153$ (13)    & $223.65$ (17)   & $593.26$ (18)     & $1142.1$ (16)     & $2706.4$ (17) & \cellcolor{green!25} $6698.3$ (18) \\\hline
        \textbf{Nested Newton}& $25.192$ (5)     & $71.369$ (5)     & $195.74$ (5)    & $497.13$ (5)      & $1349.4$ (6)      & $2607.4$ (5) & \cellcolor{red!25} NAN \\\hline
        \hline

        \multicolumn{8}{c}{\Large \textbf{Example \eqref{E3}}}\vspace{0.25em}\\\cline{1-8}\cline{1-8}
        \textbf{Solution}               & $C = -1.0985$      & $C = -1.7721$      & $C = -2.4504$      & $C = -3.1345$ & $C = -3.8225$ & $C = -4.5128$ & $C = -5.2045$ \\\hline
        \textbf{Standard Newton}        & \cellcolor{green!25} $0.40778$ (1)    &  \cellcolor{green!25} $3.6043$ (3)    &  $13.868$ (3) & $69.582$ (4) & $287.73$ (4) & \cellcolor{red!25} NAN & \cellcolor{red!25} NAN \\\hline
        \textbf{Damped Newton}          & $0.4165$ (1, 0)    &  $3.7155$ (3, 0)   & \cellcolor{green!25} $13.692$ (3, 0)  & \cellcolor{green!25} $68.415$ (4,0) & \cellcolor{green!25} $284.54$ (4, 0) & \cellcolor{red!25} NAN & \cellcolor{red!25} NAN \\\hline
        \textbf{Nested Bisection}       & $31.577$ (17)      & $81.748$ (15)      & $200.37$ (17)      & $499.13$ (18) & $1059$ (17) & \cellcolor{green!25} $1553.4$ (18) & \cellcolor{green!25} $3408$ (17) \\\hline
        \textbf{Nested Newton-Bisection}& $26.571$ (4)       &  $105.78$ (6)      &  $ 158.6$ (4)      & $483.55$ (5) & $967.55$ (5) & \cellcolor{red!25} NAN & $4973.4$ (7) \\\hline
        \hline
    \end{tabular}}
    \caption{Computation time (and iterations) with error tolerance $10^{-5}$ and uniform measure $d\mu = dx_1dx_2$}\label{tab:Unif}
\end{table}

\begin{table}[ht]
    \centering\scalebox{0.6}{\setstretch{2.5}
    \begin{tabular}{||c||c|c|c|c|c|c|c||}\hline\hline
        \textbf{Number of Target Points:} & $\mathbf{N=3}$ & $\mathbf{N=6}$ & $\mathbf{N=12}$ & $\mathbf{N=24}$ & $\mathbf{N=48}$ & $\mathbf{N=96}$ & $\mathbf{N=192}$\\\hline\hline
        
        \multicolumn{7}{c}{\Large \textbf{Example \eqref{E1}}}\vspace{0.25em}\\\cline{1-8}\cline{1-8}
        \textbf{Solution}               & $C = -1.403$      & $C = -2.1136$    & $C = -2.8009$    & $C = -3.4869$     & $C = -4.175$      & $C = -4.8649$ & $C = -5.5562$\\\hline
        \textbf{Standard Newton}        & \cellcolor{red!25} NAN      &  \cellcolor{red!25} NAN     &  \cellcolor{red!25} NAN     & \cellcolor{red!25} NAN & \cellcolor{red!25} NAN & \cellcolor{red!25} NAN & \cellcolor{red!25} NAN \\\hline
        \textbf{Damped Newton}          & \cellcolor{green!25} $0.83118$ (3, 1)  & \cellcolor{green!25} $6.3935$ (5, 2)  & \cellcolor{green!25} $17.78$ (5, 2)  & \cellcolor{green!25} $89.89$ (5, 2)    & \cellcolor{green!25} $290.34$ (4, 2)   & \cellcolor{green!25} $1613.1$ (5, 2) & \cellcolor{red!25} NAN\\\hline
        \textbf{Nested Bisection}       & $36.532$ (18)     & $124.26$ (18)    & $247.53$ (17)   & $386.78$ (12)     & $991.95$ (14)     & $2262.5$ (15) & \cellcolor{green!25} $5128.7$ (16) \\\hline
        \textbf{Nested Newton}& $25.485$ (5)      & $130.64$ (6)     & $202.72$ (4)    & $475.68$ (4)      & $933.64$ (3)      & $2316.2$ (4) & $7468.3$ (6) \\\hline
        \hline

        \multicolumn{7}{c}{\Large \textbf{Example \eqref{E2}}}\vspace{0.25em}\\\cline{1-8}\cline{1-8}
        \textbf{Solution}               & $C = -1.3624$    & $C = -2.0517$      & $C = -2.7334$     & $C =  -3.4182$    & $C = -4.1063$     & $C = -4.7967$ & $C = -5.4884$ \\\hline
        \textbf{Standard Newton}        & \cellcolor{green!25} $0.99633$ (4)    & \cellcolor{red!25} NAN                & \cellcolor{red!25} NAN               & \cellcolor{red!25} NAN               & \cellcolor{red!25} NAN               & \cellcolor{red!25} NAN & \cellcolor{red!25} NAN \\\hline
        \textbf{Damped Newton}          & $1.1657$ (4, 0)  &  \cellcolor{green!25} $3.9864$ (4, 1)   & \cellcolor{green!25} $14.167$ (4, 2)   & \cellcolor{green!25} $72.817$ (5,3)    & \cellcolor{green!25} $371.12$ (6, 5)   & \cellcolor{green!25} $1764.4$ (6, 6) & $13892$ (6, 7) \\\hline
        \textbf{Nested Bisection}       & $28.559$ (17)    & $73.398$ (17)      & $240.78$ (17)     & $445.37$ (17)     & $1009.3$ (16)     & $2278.1$ (17) & \cellcolor{green!25} $4481.4$ (14) \\\hline
        \textbf{Nested Newton}& $31.632$ (6)     & $101.99$ (7)       & $177.07$ (4)      & $281.06$ (3)      & $1103.6$ (5)      & $2067.2$ (4) & $6707.2$ (6) \\\hline
        \hline

        \multicolumn{7}{c}{\Large \textbf{Example \eqref{E4}}}\vspace{0.25em}\\\cline{1-8}\cline{1-8}
        \textbf{Solution} & $C = -1.3593$      & $C = -2.1325$      & $C = -2.8632$    & $C = -3.573$     & $C = -4.2735$ & $C = -4.97$ &  \\\hline
        \textbf{Standard Newton} & \cellcolor{red!25} NAN  & \cellcolor{red!25} NAN  & \cellcolor{red!25} NAN & \cellcolor{red!25} NAN & \cellcolor{red!25} NAN & \cellcolor{red!25} NAN & \cellcolor{red!25} NAN \\\hline
        \textbf{Damped Newton}          & \cellcolor{green!25} $1.0792$ (4, 2) & \cellcolor{red!25} NAN  & \cellcolor{red!25} NAN& \cellcolor{red!25} NAN& \cellcolor{red!25} NAN & \cellcolor{red!25} NAN & \textit{Solution is not nested} \\\hline
        \textbf{Nested Bisection}       & $27.762$ (17) & $78.05$ (16)  & $311.73$ (17) & \cellcolor{green!25} $525.18$ (16) & \cellcolor{green!25} $1269.9$ (18) & $2712.8$ (18) & \textit{Solution is not nested} \\\hline
        \textbf{Nested Newton-Bisection}& $24.132$ (5)  & \cellcolor{green!25} $68.115$ (4)  & \cellcolor{green!25} $243.86$ (4)  & $662.3$ (6)      & $1367.8$ (5)  & \cellcolor{green!25} $1687$ (3) & \textit{Solution is not nested} \\\hline
        \hline
    \end{tabular}}
    \caption{Computation time (and iterations) with error tolerance $10^{-5}$ and nonuniform measure $d\mu = 4x_1x_2dx_1dx_2$}\label{tab:NonUnif}
\end{table}

As shown in the tables, Newton-type methods initially outperform the nested approach in terms of computation time. However, their computational complexity scales as $\mathcal{O}(N^3)$ in the worst case. Consequently, the runtime typically increases by a factor of approximately $4$–$5$ when the number of target points is doubled. In contrast, the nested method exhibits nearly linear complexity, since it involves solving scalar equations sequentially. As a result, its runtime increases more modestly—by a factor of about $2$–$3$—when the number of target points is doubled. Therefore, for sufficiently large $N$, the runtime of both approaches becomes comparable, and eventually, the nested method outperforms Newton’s method.

Another important observation is that Newton's method exhibits increasing sensitivity to the choice of initial guess, particularly in the presence of non-uniform measures, as demonstrated in Table~\ref{tab:NonUnif}. In contrast, the nested method remains significantly more robust with respect to initialization.

Figures~\ref{fig:E1}, \ref{fig:E2}, and \ref{fig:E3} illustrate the solutions for each example in the case of $N = 12$ target points. Notably, while the solution to Example~\eqref{E3} is nested under a uniform measure (see Table~\ref{tab:Unif} and Figure~\ref{fig:E3_1}), it ceases to be nested under a non-uniform measure even with three target points, as shown in Figure~\ref{fig:E3_2}. A similar pattern is observed in Example~\eqref{E4}: the solution remains nested under a uniform measure but becomes non-nested when a non-uniform measure is applied with $N = 192$ target points, as illustrated in Table~\ref{tab:NonUnif}.


\begin{figure}[ht]
    \centering
    \begin{subfigure}{.49\textwidth}
        \centering
        \includegraphics[width=\linewidth]{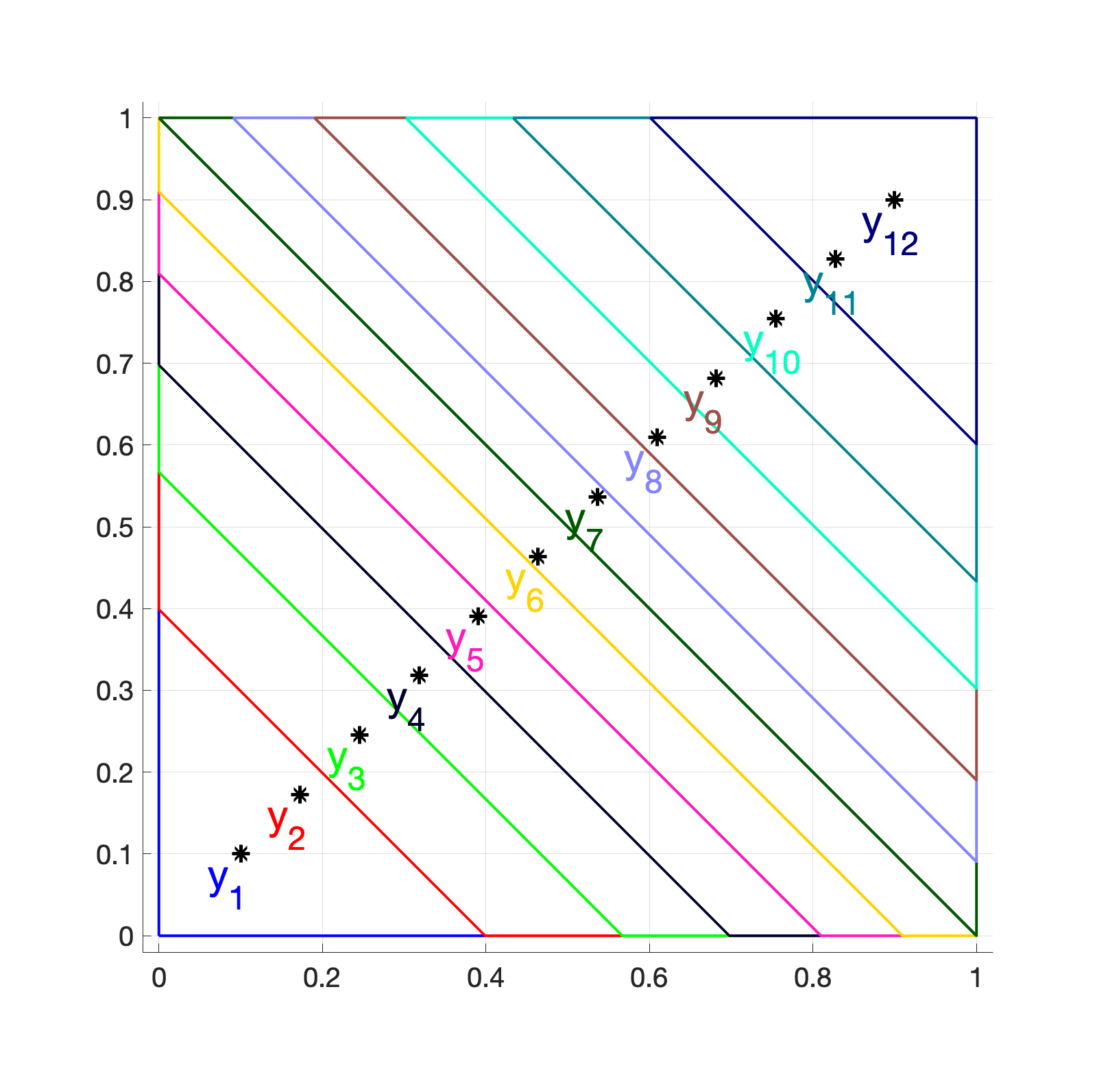}
        \caption{Uniform measure: $d\mu = dx_1dx_2$}
    \end{subfigure}
    \begin{subfigure}{.49\textwidth}
        \centering
        \includegraphics[width=\linewidth]{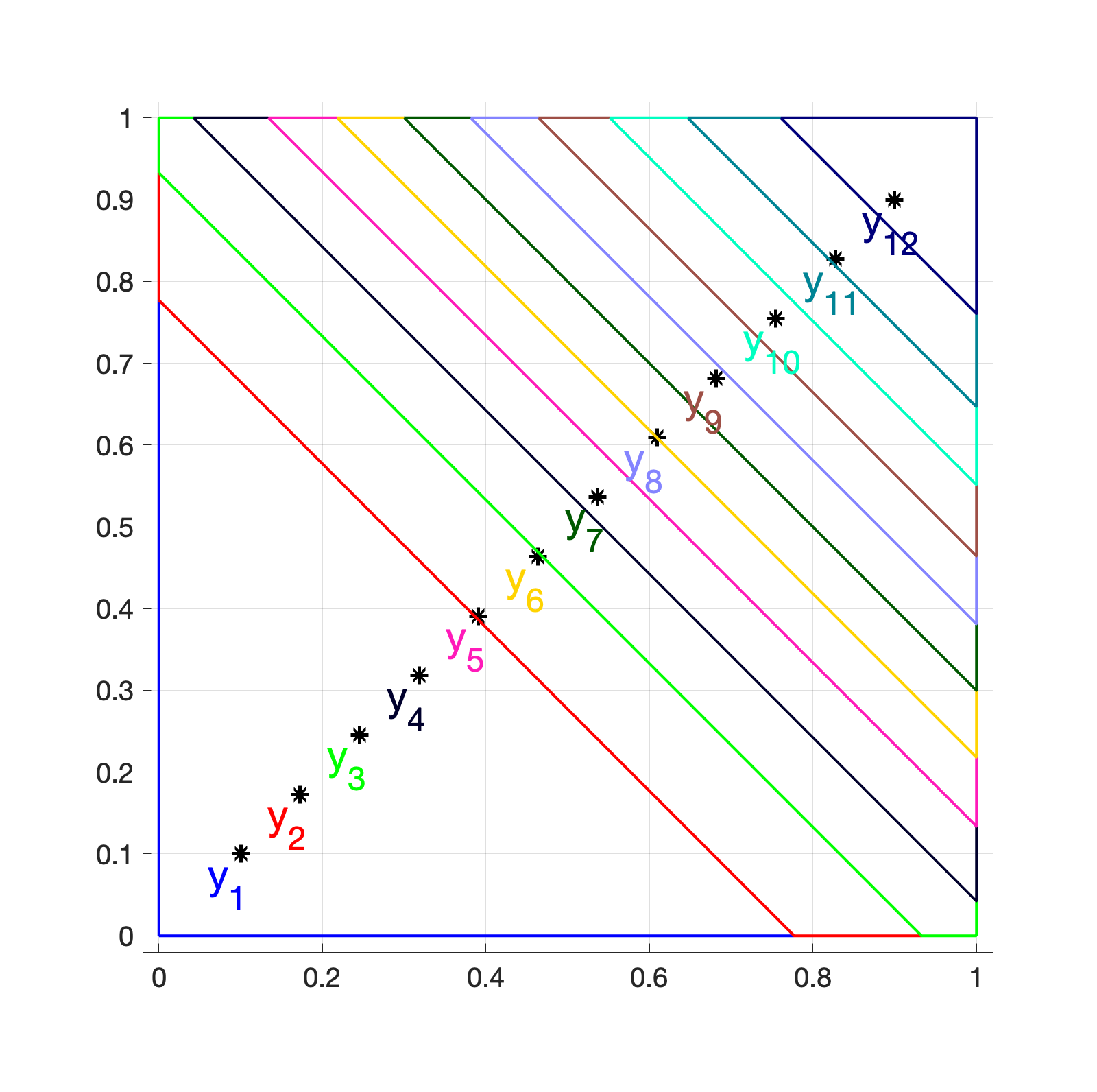}
        \caption{Non-uniform measure: $d\mu = 4x_1x_2dx_1dx_2$}
    \end{subfigure}
    \caption{Solution for Example \eqref{E1} with 12 target points}\label{fig:E1}
\end{figure}

\begin{figure}[ht]
    \centering
    \begin{subfigure}{.49\textwidth}
        \centering
        \includegraphics[width=\linewidth]{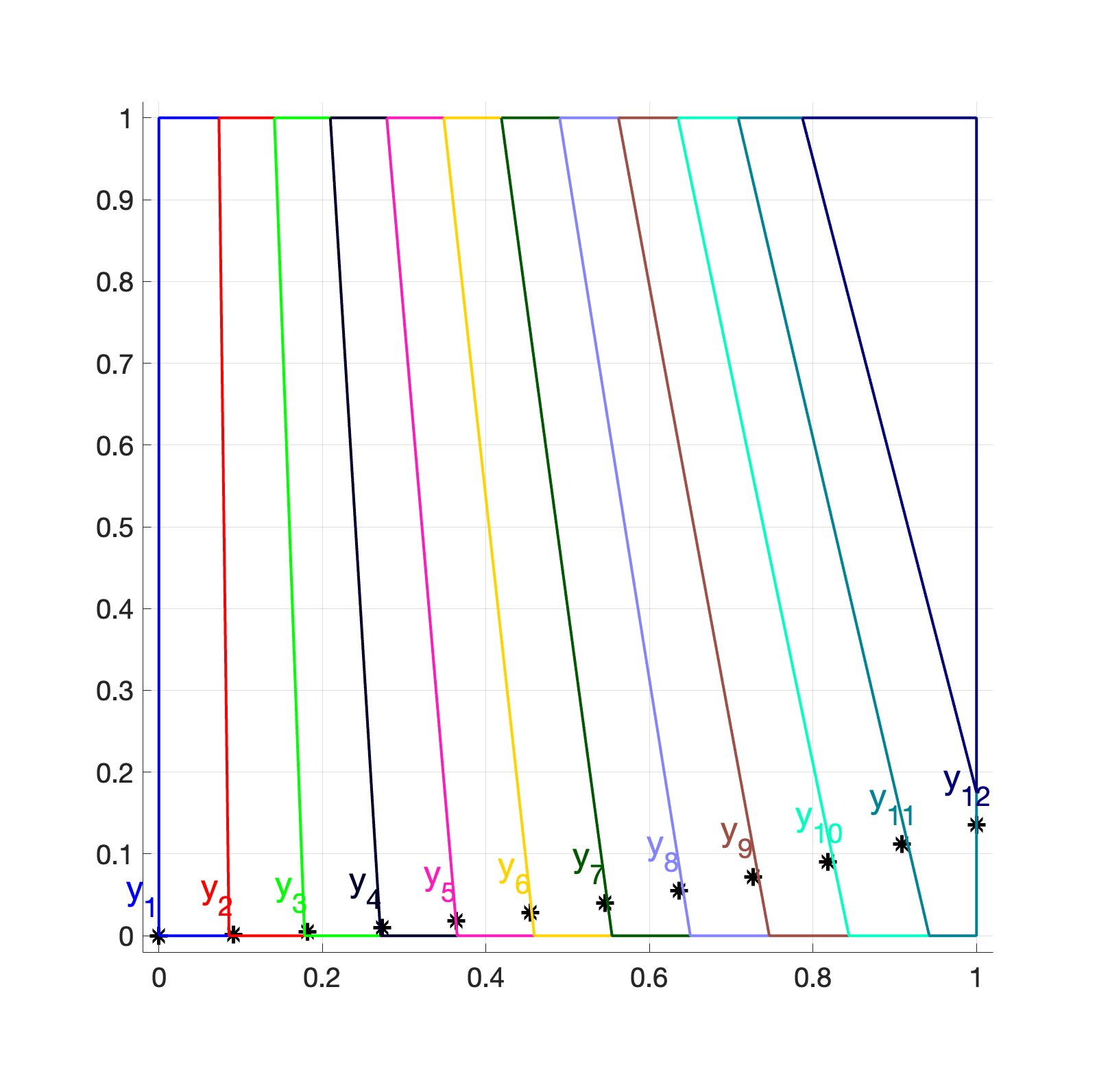}
        \caption{Uniform measure: $d\mu = dx_1dx_2$}
    \end{subfigure}
    \begin{subfigure}{.49\textwidth}
        \centering
        \includegraphics[width=\linewidth]{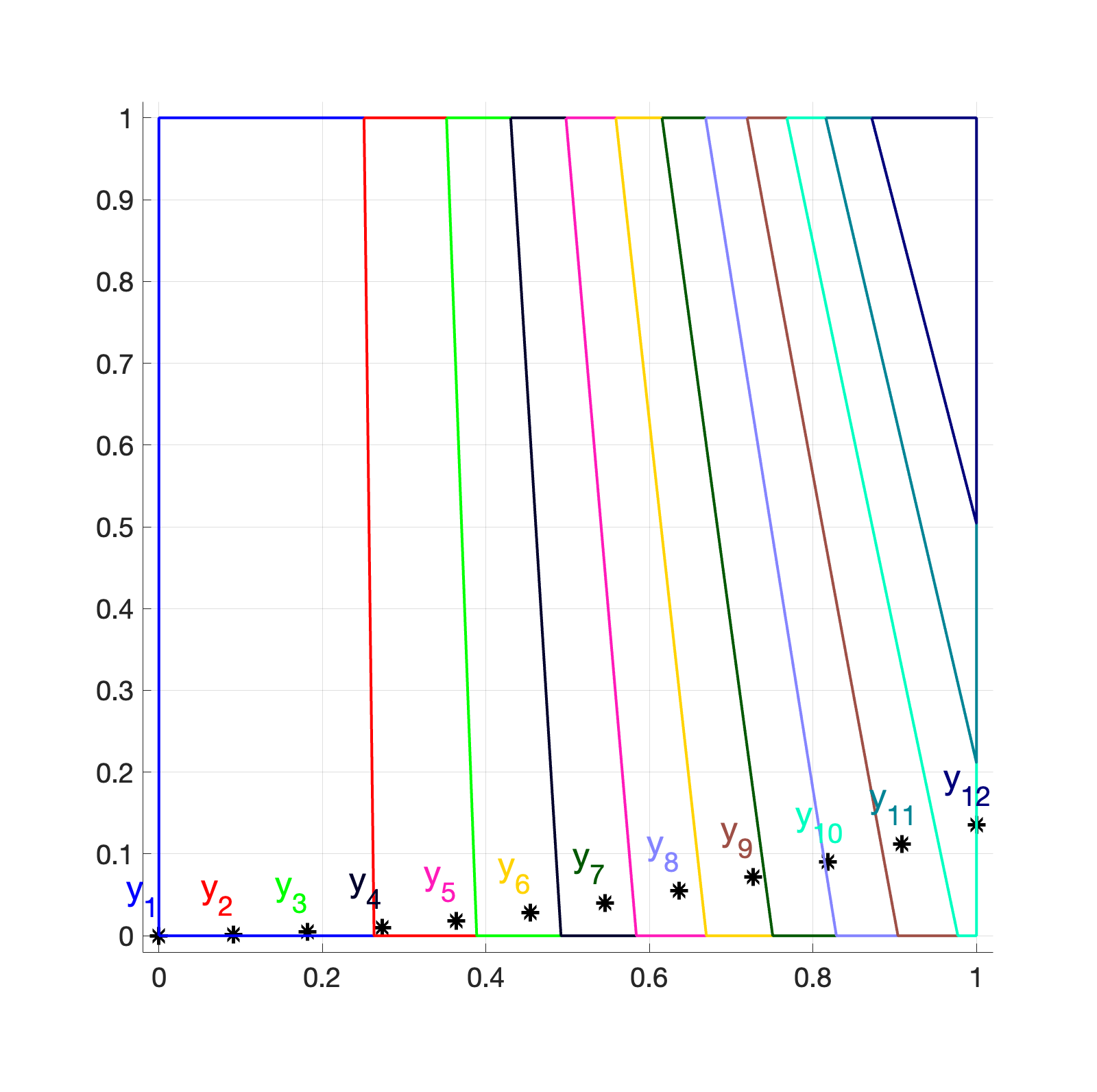}
        \caption{Non-uniform measure: $d\mu = 4x_1x_2dx_1dx_2$}
    \end{subfigure}
    \caption{Solution for Example \eqref{E2} with 12 target points}\label{fig:E2}
\end{figure}

\begin{figure}[ht]
    \centering
    \begin{subfigure}{.49\textwidth}
        \centering
        \includegraphics[width=\linewidth]{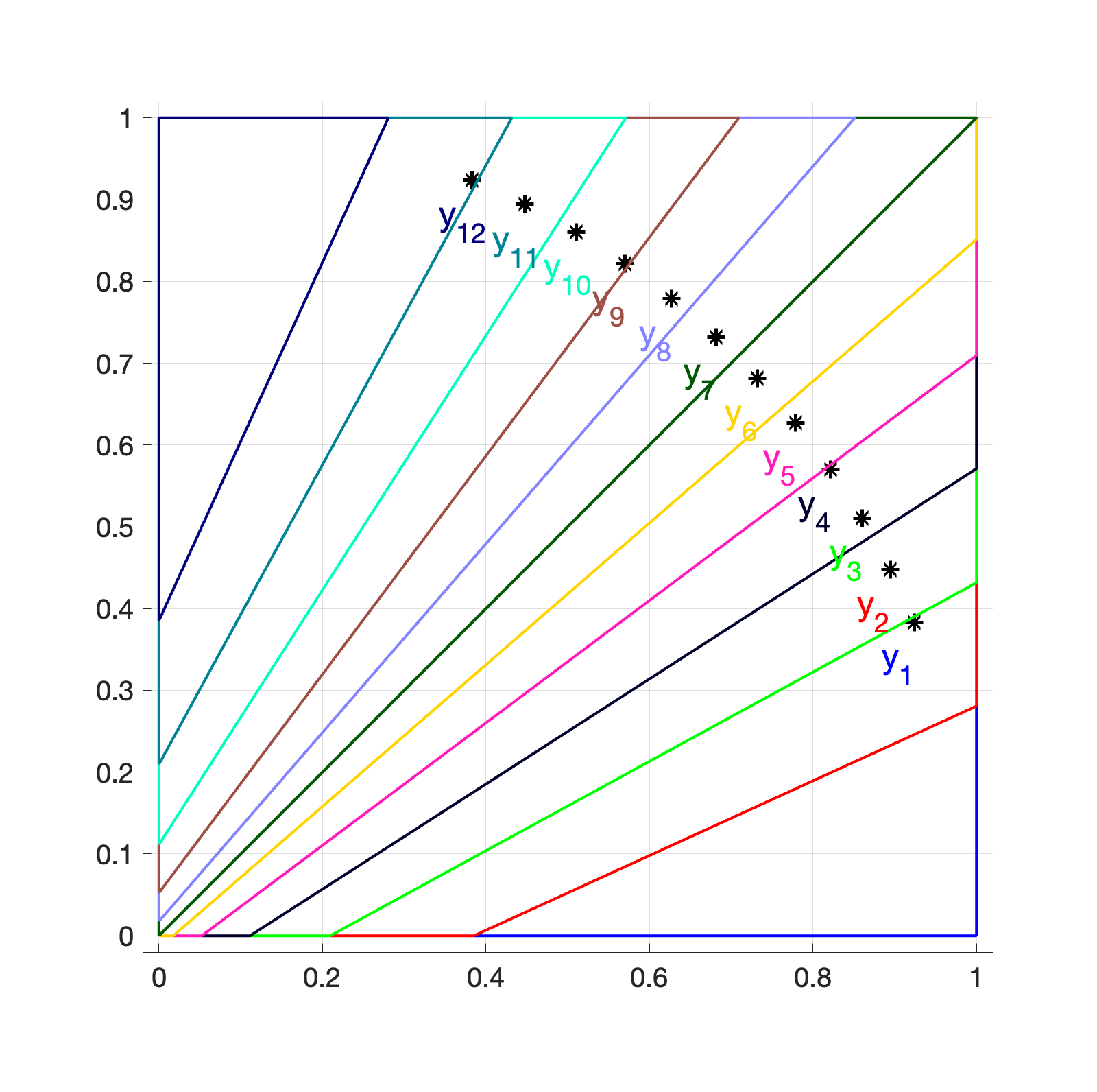}
        \caption{Uniform measure: $d\mu = dx_1dx_2$, $N = 12$}
        \label{fig:E3_1}
    \end{subfigure}
    \begin{subfigure}{.49\textwidth}
        \centering
        \includegraphics[width=\linewidth]{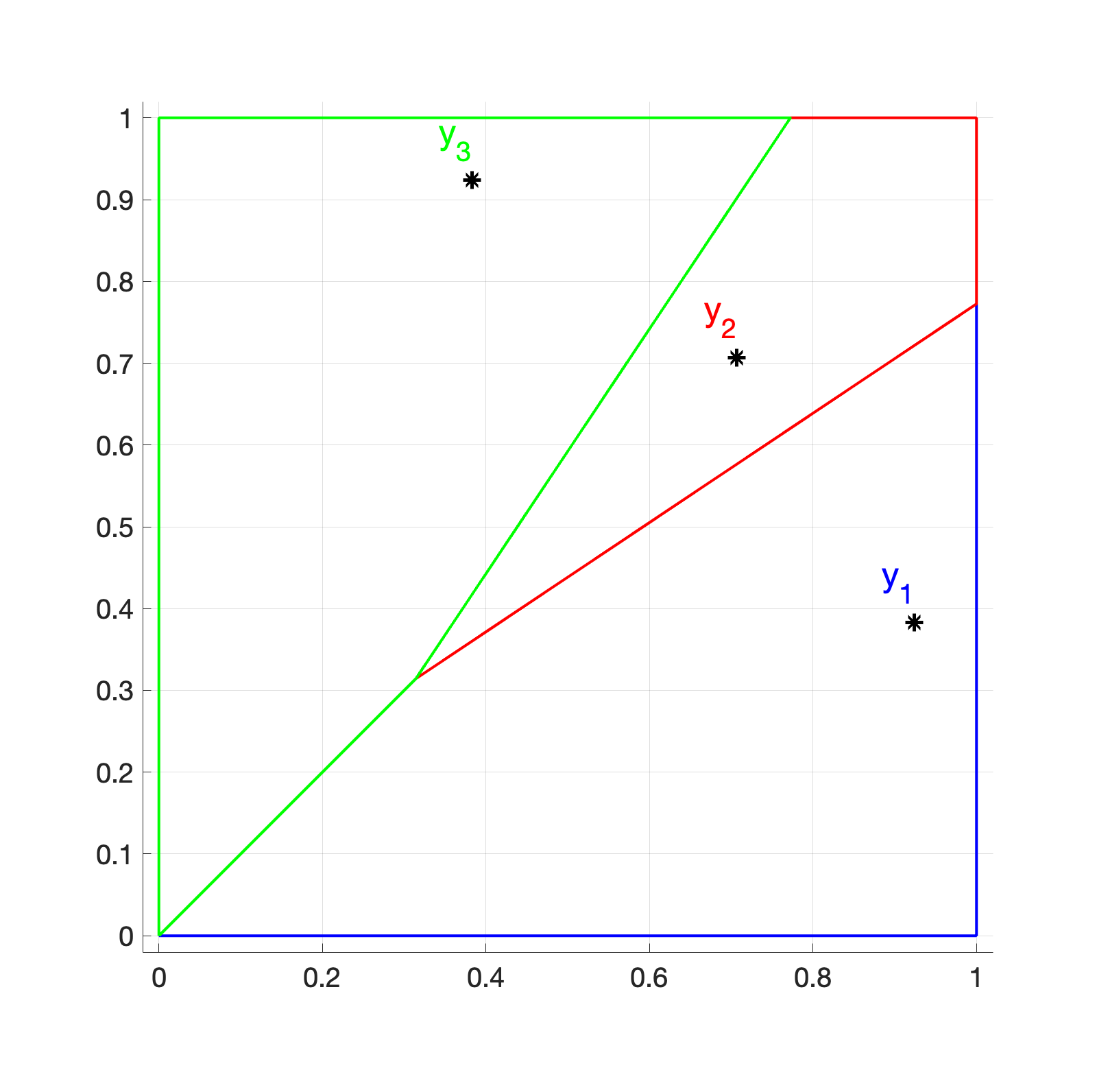}
        \caption{Non-uniform measure: $d\mu = 4x_1x_2dx_1dx_2$, $N = 3$}
        \label{fig:E3_2}
    \end{subfigure}
    \caption{Solution for Example \eqref{E3} with 12 target points}\label{fig:E3}
\end{figure}

\section{Hedonic Pricing Problem}\label{hedonic problem}
We now turn to a class of problems that arise naturally in two-sided matching and market design, often referred to as hedonic pricing problems \cite{COCV_2005__11_1_57_0}. In these settings, agents from two distinct populations are matched indirectly through a shared distribution of outcomes. This leads to a variational formulation where the objective is to find a probability measure $\nu$ on a finite set $Y = \{y_1, \dots, y_N\}$ that balances transport costs from both populations.

This corresponds to minimizing the sum of two optimal transport costs:
\begin{equation}\label{hp}
\min_{\nu \in \mathcal{P}(Y)} \left\{ \mathcal{W}_{c_1}(\mu_1, \nu) + \mathcal{W}_{c_2}(\mu_2, \nu) \right\},
\end{equation}
where $\mu_1$ and $\mu_2$ are continuous distributions representing the two agent populations, and $c_1$, $c_2$ are their respective cost functions.

In \cite{nenna2020variational}, the notion of nestedness was extended to the two-population setting. In this work, we further extend it to the semi-discrete case. Specifically, we define a solution to be \emph{hedonically nested} when both optimal transport plans from the respective populations to the common outcome distribution exhibit discrete nestedness.

\begin{defn}
    The solution $\nu$ of problem \eqref{hp} is \emph{hedonically nested} if both $(c_1,\mu_1,\nu)$ and $(c_2,\mu_2,\nu)$ are discretely nested.
\end{defn}

\subsection{Problem Statement}

Throughout this section, we restrict ourselves to the setting where $X = X^1 = X^2 \subset \mathbb{R}^d$ and $c_1 = c_2 \in C^2(X \times Y)$ where $c$ satisfies the generalized Spence-Mirlees, or twist, condition (found in, for example, \cite{santambrogio2015optimal}). In addition, we are given two absolutely continuous probability measures $\mu_1$ and $\mu_2$ supported on the bounded, open domain $X$, and a collection of target points $y_i \in Y \subset \mathbb{R}^d$, $i = 1, 2, \dots, N$.
From \cite{chiappori2010hedonic}, it is known that the solution to the corresponding hedonic problem is unique, 
and the optimality condition
\[
    v^1 + v^2 = C
\]
holds for some constant $C$, where $v^1$ and $v^2$ are the dual Kantorovich potentials corresponding to the optimal transport problems $(c, \mu_1, \nu)$ and $(c, \mu_2, \nu)$, respectively.

This motivates the following reformulation:
\begin{prob}\label{prob:Hed2}
Given target points $y_i \in Y \subset \mathbb{R}^d$, $i = 1, \dots, N$, a domain $X \subset \mathbb{R}^d$, and a cost function $c$, find $(\mathbf{v}, C) \in \mathbb{R}^N \times \mathbb{R}$ such that $v_1 = 0$ and
\[
    \mu_1(\text{Lag}_i(\mathbf{v})) = \mu_2(\text{Lag}_i(C - \mathbf{v})), \quad \text{for } i = 1, \dots, N,
\]
where $\mu_1$ and $\mu_2$ are probability measures on $X$, and
\[
    \text{Lag}_i(\mathbf{v}) = \left\{ x \in X \ \middle| \ c(x, y_i) - v_i \leq c(x, y_j) - v_j,\ \forall j \neq i \right\}.
\]
\end{prob}

This reformulation is justified by observing that for the equilibrium measure $\nu$, optimal transport yields $\mu_1(\text{Lag}_i(v^1)) = \mu_2(\text{Lag}_i(v^2)) = \nu_i$ for each $i$, where $v^1$ and $v^2$ are the corresponding dual potentials. Using the optimality condition $v^1 + v^2 = C$, we can set $v^1 = \mathbf{v}=(v_i)_{i=1}^N$ and $v^2 = (C - v_i)_{i=1}^N$, thereby reducing the problem to a single potential vector $\mathbf{v}$ and a constant $C$.

The formulation in Problem~\ref{prob:Hed2} provides a natural framework for analyzing the structure of the hedonic pricing problem. Moreover, in the special case where the solution is hedonically nested, further simplifications are possible. Specifically, the nested structure of the optimal partitions allows for a sequential solution method, which underpins the nested formulation described in the next section.

\textbf{Nested Algorithm for the Hedonic Problem.}  
Assume that the solution to Problem~\ref{prob:Hed2}, denoted by $(\mathbf{v}^*, C^*)$, is \emph{hedonically nested}; that is, both Laguerre tessellations $\{\text{Lag}_i(\mathbf{v}^*)\}_{i=1}^N$ and $\{\text{Lag}_i(C^* - \mathbf{v}^*)\}_{i=1}^N$ are nested. Under this assumption, the hedonic pricing problem admits a sequential formulation:
\begin{gather*}
    v_1 = 0 \ \Rightarrow\ 
    \mu_1(\text{Lag}_1(v_1, v_2)) = \mu_2(\text{Lag}_1(C - v_1, C - v_2)) \ \Rightarrow\ v_2, \\
    \mu_1(\text{Lag}_2(v_1, v_2, v_3)) = \mu_2(\text{Lag}_2(C - v_1, C - v_2, C - v_3)) \ \Rightarrow\ v_3, \\
    \vdots \\
    \mu_1(\text{Lag}_{N-1}(v_{N-2}, v_{N-1}, v_N)) = \mu_2(\text{Lag}_{N-1}(C - v_{N-2}, C - v_{N-1}, C - v_N)) \ \Rightarrow\ v_N, \\
    \text{Error}(C, \mathbf{v}) := \mu_1(\text{Lag}_N(v_{N-1}, v_N)) - \mu_2(\text{Lag}_N(C - v_{N-1}, C - v_N)).
\end{gather*}

To justify this algorithm, we will show that if the first $N-1$ equations are satisfied, then the final error term $\text{Error}(C, \mathbf{v})$ necessarily vanishes.

\begin{prop}\label{rank}
    Assume that the solution to Problem~\ref{prob:Hed2}, denoted by $(\mathbf{v}^*, C^*)$, is \emph{hedonically nested}. Further, suppose that $(\mathbf{v}^*, C^*)$ satisfies the first $N-1$ equations:
    \begin{gather*}
         \mu_1(\text{Lag}_1(v_1^*, v_2^*)) = \mu_2(\text{Lag}_1(C^* - v_1^*, C^* - v_2^*)), \\
         \mu_1(\text{Lag}_2(v_1^*, v_2^*, v_3^*)) = \mu_2(\text{Lag}_2(C^* - v_1^*, C^* - v_2^*, C^* - v_3^*)), \\
         \vdots \\
         \mu_1(\text{Lag}_{N-1}(v_{N-2}^*, v_{N-1}^*, v_N^*)) = \mu_2(\text{Lag}_{N-1}(C^* - v_{N-2}^*, C^* - v_{N-1}^*, C^* - v_N^*)).
    \end{gather*}
    Then the error function is zero,
    \[
        \text{Error}(C^*, \mathbf{v}^*) := \mu_1(\text{Lag}_N(v_{N-1}^*, v_N^*)) - \mu_2(\text{Lag}_N(C^* - v_{N-1}^*, C^* - v_N^*)) = 0.
    \]
\end{prop}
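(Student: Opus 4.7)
The plan is to exploit two facts: that $\mu_1$ and $\mu_2$ are probability measures, and that under hedonic nestedness each of the families $\{\text{Lag}_i(\mathbf{v}^*)\}_{i=1}^N$ and $\{\text{Lag}_i(C^*-\mathbf{v}^*)\}_{i=1}^N$ forms a partition of $X$ in which each cell is determined by at most three adjacent potentials.

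First, I would observe that since the tessellation associated with $\mathbf{v}^*$ is nested, the cell $\text{Lag}_i(\mathbf{v}^*)$ shares a boundary only with $\text{Lag}_{i-1}(\mathbf{v}^*)$ and $\text{Lag}_{i+1}(\mathbf{v}^*)$; consequently
\begin{equation*}
\text{Lag}_i(\mathbf{v}^*) = \text{Lag}_i(v_{i-1}^*, v_i^*, v_{i+1}^*), \quad 1 < i < N,
\end{equation*}
with the obvious two-potential formulas at the endpoints, and the same holds for the second tessellation with potentials $C^* - v_i^*$. This identification is what makes the sequential formulation valid: each cell appearing in the first $N-1$ assumed equations coincides with the corresponding cell of the full Laguerre tessellation.

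Second, since both tessellations partition $X$ up to boundaries that are $\mu_1$- and $\mu_2$-null (by the standing assumption that $\mu(X_=^N(y_i,k))=0$, applied to each measure), one has
\begin{equation*}
\sum_{i=1}^N \mu_1(\text{Lag}_i(\mathbf{v}^*)) = 1 = \sum_{i=1}^N \mu_2(\text{Lag}_i(C^* - \mathbf{v}^*)).
\end{equation*}
Subtracting the sum of the $N-1$ assumed equalities from each probability-mass identity then forces
\begin{equation*}
\mu_1(\text{Lag}_N(v_{N-1}^*, v_N^*)) = \mu_2(\text{Lag}_N(C^* - v_{N-1}^*, C^* - v_N^*)),
\end{equation*}
which is exactly $\text{Error}(C^*, \mathbf{v}^*) = 0$.

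The main, and essentially only, nontrivial step is the cell identification in the first paragraph: confirming that the reduced cells defined by cost comparisons against only the two adjacent targets $y_{i-1}$ and $y_{i+1}$ coincide with the full Laguerre cells, defined by comparisons against all $y_j$. This is precisely where the hedonic nestedness hypothesis is used for both tessellations. Once it is secured, the conclusion is a straightforward accounting of probability mass.
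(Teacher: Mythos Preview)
Your proposal is correct and follows essentially the same approach as the paper: both arguments use that the two Laguerre tessellations partition $X$, so the masses sum to $1$, and then subtract the $N-1$ assumed equalities to conclude $\text{Error}(C^*,\mathbf{v}^*)=0$. You are slightly more explicit than the paper about the cell identification $\text{Lag}_i(\mathbf{v}^*)=\text{Lag}_i(v_{i-1}^*,v_i^*,v_{i+1}^*)$, which the paper takes for granted, but the core argument is identical.
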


\begin{proof}
    Since the Laguerre tessellations $\{\text{Lag}_i(\mathbf{v}^*)\}_{i=1}^N$ and $\{\text{Lag}_i(C^* - \mathbf{v}^*)\}_{i=1}^N$ form partitions of $X$, we have:
    \[
        \sum_{i=1}^{N} \mu_1(\text{Lag}_i(\mathbf{v}^*)) = \sum_{i=1}^{N} \mu_2(\text{Lag}_i(C^* - \mathbf{v}^*)) = 1.
    \]
    As a result, we obtain:
    \begin{align*}
        \text{Error}(C^*, \mathbf{v}^*) &= \mu_1(\text{Lag}_N(v_{N-1}^*, v_N^*)) - \mu_2(\text{Lag}_N(C^* - v_{N-1}^*, C^* - v_N^*)) \\
        &= \left(1 - \sum_{i=1}^{N-1} \mu_1(\text{Lag}_i(\mathbf{v}^*))\right) - \left(1 - \sum_{i=1}^{N-1} \mu_2(\text{Lag}_i(C^* - \mathbf{v}^*))\right) \\
        &= \sum_{i=1}^{N-1} \left[\mu_2(\text{Lag}_i(C^* - \mathbf{v}^*)) - \mu_1(\text{Lag}_i(\mathbf{v}^*))\right].
    \end{align*}
    By assumption, the terms in the sum cancel individually, yielding $\text{Error}(C^*, \mathbf{v}^*) = 0$.
\end{proof}

\begin{rem}
    By Proposition~\ref{rank}, the hedonic pricing problem has rank $N-1$ with $N+1$ variables. As a result, when the solution is hedonically nested, one can freely specify two degrees of freedom—typically $C$ and $v_1$—and then sequentially solve the remaining $N-1$ scalar equations to recover the full solution. In our numerical experiments, we fix $C = 0$ and $v_1 = 0$. Once the remaining components of $\mathbf{v}$ are determined through this sequential procedure, the uniqueness of the solution ensures that the induced measure $\nu$ is the unique minimizer of the hedonic pricing problem.
\end{rem}

\subsection{Numerical Results}
We present numerical experiments comparing Newton’s method and the nested method for solving the hedonic pricing problem. In the following examples, we focus on the case $c(x,y) = \|x - y\|_2^2$,  where $x \in X \subseteq \mathbb{R}^2$ and the discrete variable $y$ lies along a curve $y(t) \in \mathbb{R}^2$, and compare the convergence behavior and computational efficiency of both methods under different update strategies, including damped versus full-step Newton and bisection versus Newton-style updates within the nested algorithm.

\begin{rem}
    Proposition~\ref{rank} implies that the nested algorithm reduces the hedonic pricing problem to solving $N-1$ scalar equations, each depending only on the previously computed value. Once an earlier potential is determined, the subsequent one can be computed sequentially. Within this framework, two solution strategies can be employed:
    \begin{itemize}
        \item \emph{Nested Bisection:} Each scalar equation is solved via the bisection method. This approach is robust and less sensitive to initial guesses, making it suitable when derivative information is unavailable or unreliable. However, it typically converges more slowly than Newton-based methods.
    
        \item \emph{Nested Newton:} Each scalar equation is solved using Newton’s method, possibly combined with bisection for initialization or as a safeguard. Although Newton’s method requires a reasonable initial guess, such guesses are often naturally provided by the sequential structure, enabling fast and accurate convergence when derivative information is accessible.
    \end{itemize}
\end{rem}

Table~\ref{hedonic table} compares the performance of the nested strategies—\emph{Nested Bisection} and \emph{Nested Newton}—against global (vector-based) Newton methods (both standard and damped) across a range of matching scenarios.

\begin{table}[ht]
    \centering\scalebox{0.6}{\setstretch{2.5}
    \begin{tabular}{||c||c|c|c|c|c|c|c||}\hline\hline
        $\mathbf{v}^{0} = \mathbf{0}\ ;\ v_1 = 0\ ,\ C = 0$ & $\mathbf{N=3}$ & $\mathbf{N=6}$ & $\mathbf{N=12}$ & $\mathbf{N=24}$ & $\mathbf{N=48}$ & $\mathbf{N=96}$ & $\mathbf{N=192}$ \\\hline\hline
        
        \multicolumn{8}{c}{\Large \textbf{Straight line:} $y(t) = \icol{t\\t}$, $t\in [\frac{1}{10}, \frac{9}{10}]$}\vspace{0.25em}\\\cline{1-8}\cline{1-8}
        \textbf{Standard Newton}        & $1.9975$ (3)     & $6.6196$ (3)     & \cellcolor{green!25} $34.171$ (4)     & $147.98$ (4)     & $809.25$ (4)     & $2988.3$ (4)     & \cellcolor{red!25} NAN\\\hline
        \textbf{Damped Newton}          & \cellcolor{green!25} $1.9513$ (3, 0)& \cellcolor{green!25} $6.5805$ (3, 0)  & $34.838$ (4, 0)  & $153.1$ (4, 0)   & $810.58$ (4, 0)  & $2963.4$ (4, 0)  & \cellcolor{red!25} NAN\\\hline
        \textbf{Nested Bisection}       & $4.7031$                            & $16.535$                              & $34.841$         & \cellcolor{green!25} $83.676$         & \cellcolor{green!25} $184.5$          & \cellcolor{green!25} $432.54$ & $747.66$ \\\hline
        \textbf{Nested Newton}& $2.3825$         & $7.5537$         & \cellcolor{red!25} NAN              & \cellcolor{red!25} NAN              & \cellcolor{red!25} NAN              & \cellcolor{red!25} NAN & \cellcolor{green!25} $401.78$ \\\hline
        \hline

        \multicolumn{8}{c}{\Large \textbf{Curve:} $y(t) = \icol{t\\t^{1.5}}$, $t\in [\frac{1}{N+1}, \frac{N}{N+1}]$}\vspace{0.25em}\\\cline{1-8}\cline{1-8}
        \textbf{Standard Newton}        & $1.9493$ (3)     & $15.658$ (4)     & $36.181$ (4)     & $230.68$ (5)     & $861$ (5)       & $4118.6$ (5)     & \cellcolor{red!25} NAN\\\hline
        \textbf{Damped Newton}          & $1.9464$ (3, 0)  & $15.37$ (4, 0)   & $36.177$ (4, 0)  & $230.94$ (5, 0)  & $860.11$ (5, 0) & $4118.6$ (5, 0)  & \cellcolor{red!25} NAN\\\hline
        \textbf{Nested Bisection}       & $4.1779$         & $15.54$          & $29.652$         & \cellcolor{green!25} $73.273$      & \cellcolor{green!25} $167.08$        & \cellcolor{green!25} $380.2$ & \cellcolor{green!25} $671.5$     \\\hline
        \textbf{Nested Newton}& \cellcolor{green!25} $1.6778$         & \cellcolor{green!25} $7.1348$         & \cellcolor{green!25} $14.598$         & \cellcolor{red!25} NAN              & \cellcolor{red!25} NAN             & \cellcolor{red!25} NAN & \cellcolor{red!25} NAN \\\hline
        \hline

        \multicolumn{8}{c}{\Large \textbf{Scaled Parabola:} $y(t) = \icol{t\\ \big{(}\frac{t}{e}\big{)}^2}$, $t\in [0, 1]$}\vspace{0.25em}\\\cline{1-8}\cline{1-8}
        \textbf{Standard Newton}        & $2.1782$ (3)     & $5.6952$ (3)     & $21.281$ (3)     & $95.268$ (3)     & $553.08$ (4)     & \cellcolor{red!25} NAN & \cellcolor{red!25} NAN\\\hline
        \textbf{Damped Newton}          & $1.8582$ (3, 0)  & $5.9949$ (3, 0)  & $21.188$ (3, 0)  & $96.860$ (3, 0)  & $552.20$ (4, 0)  & Solution is not \textit{hedonically} nested & \cellcolor{red!25} NAN\\\hline
        \textbf{Nested Bisection}       & $3.4519$         & $9.7463$         & $22.814$         & $61.754$         & $132.22$         & Solution is not \textit{hedonically} nested & $573.88$\\\hline
        \textbf{Nested Newton}& \cellcolor{green!25} $1.5977$         & \cellcolor{green!25} $4.1966$         & \cellcolor{green!25} $12.116$         & \cellcolor{green!25} $34.122$         & \cellcolor{green!25} $72.963$         & Solution is not \textit{hedonically} nested & \cellcolor{green!25} $288.33$\\\hline
        \hline
        
    \end{tabular}}
    \caption{Computation time of different methods (error tolerance is  $10^{-7}$, $d\mu_1(x) = dx_1dx_2$, $d\mu_2(x) = 4x_1x_2dx_1dx_2$)}
    \label{hedonic table}
\end{table}

Figures~\ref{straight line}--\ref{scaled parabola} illustrate cases where the solution is hedonically nested, as the region boundaries under both cases $(c,\mu_1, \nu)$ and $(c,\mu_2, \nu)$ remain non-intersecting within the domain $X$. In contrast, for target points distributed along the scaled parabola $x_2 = \left(\frac{x_1}{e}\right)^2$, the solution remains nested for some values of $N$, but fails to do so for $N = 96$. Hence, by definition, the corresponding solution $\nu$ is not hedonically nested in that case.

Overall, when hedonic nestedness holds, both the \emph{Nested Bisection} and \emph{Nested Newton} methods significantly outperform global Newton-based approaches, largely due to their sequential, non-iterative structure. While \emph{Nested Bisection} offers enhanced robustness and stability, it is generally slower on average compared to the more efficient \emph{Nested Newton} method.

\begin{figure}[ht]
    \centering
    \begin{subfigure}{.49\textwidth}
        \centering
        \includegraphics[width=\linewidth]{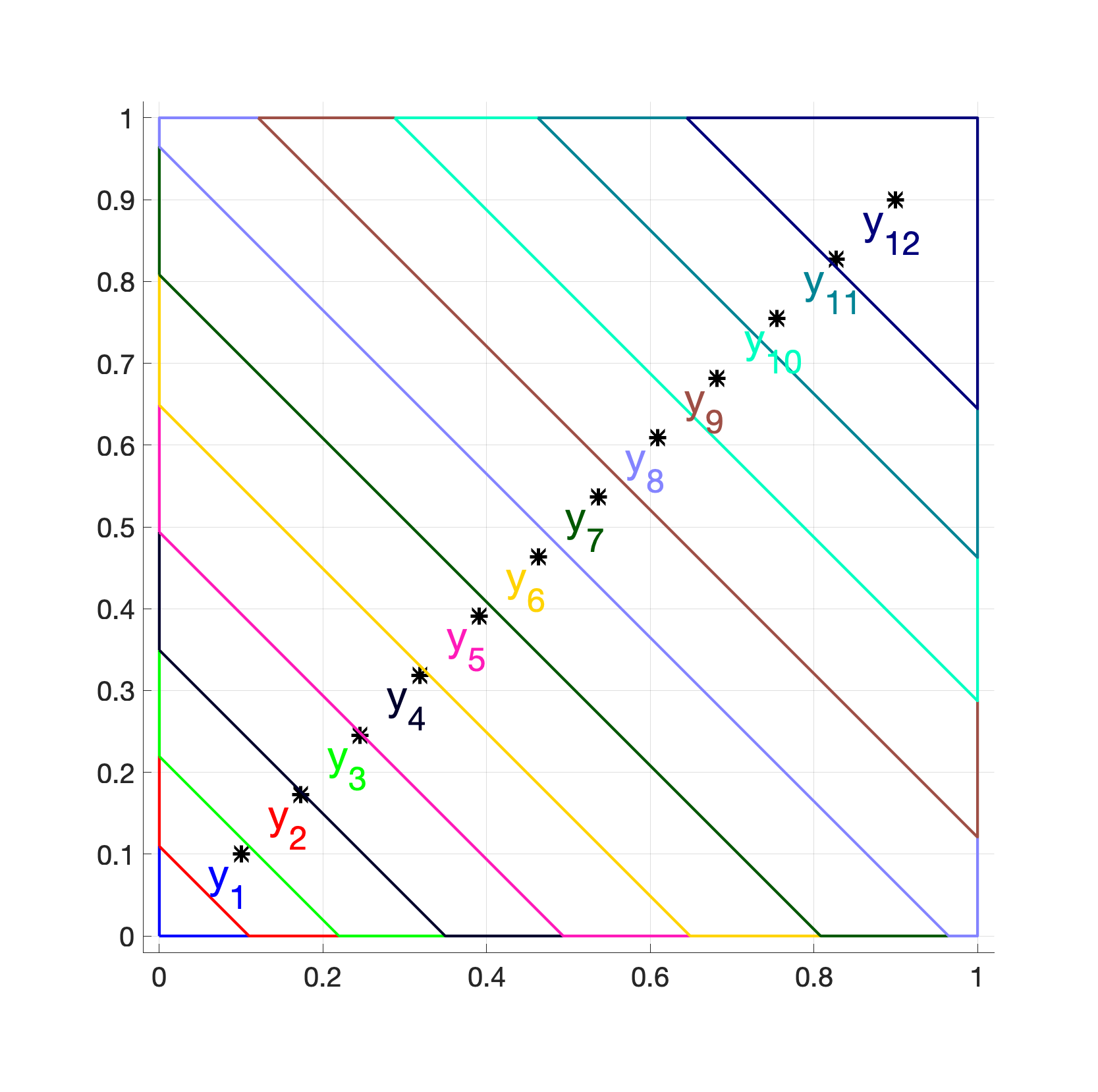}
        \caption{$d\mu_1 = dx_1dx_1$}
    \end{subfigure}
    \begin{subfigure}{.49\textwidth}
        \centering
        \includegraphics[width=\linewidth]{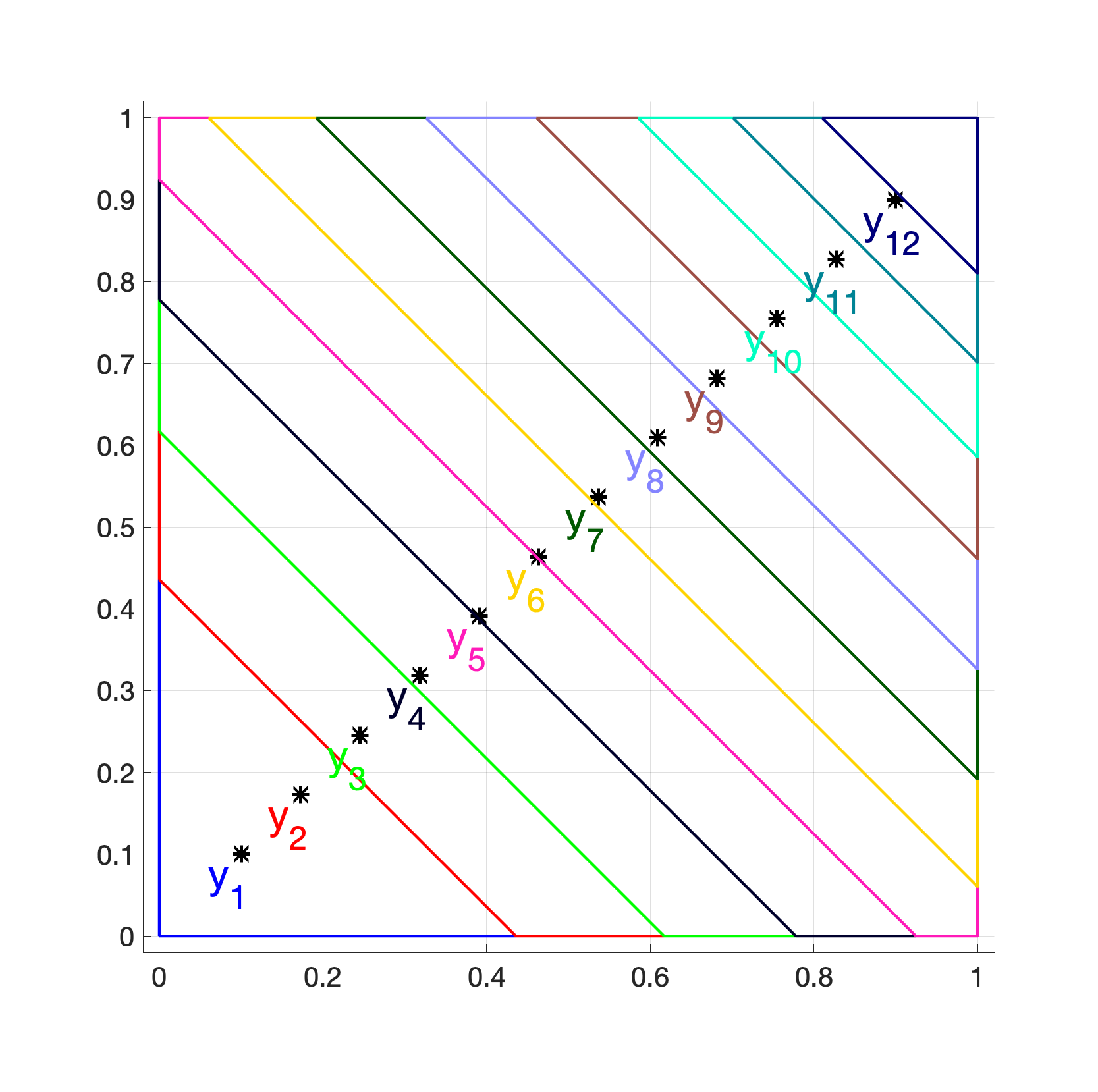}
        \caption{$d\mu_2 = 4x_1x_2dx_1dx_1$}
    \end{subfigure}
    \caption{Target points on a straight line $x_2 = x_1$, $N = 12$}
    \label{straight line}
\end{figure}

\begin{figure}[ht]
    \centering
    \begin{subfigure}{.49\textwidth}
        \centering
        \includegraphics[width=\linewidth]{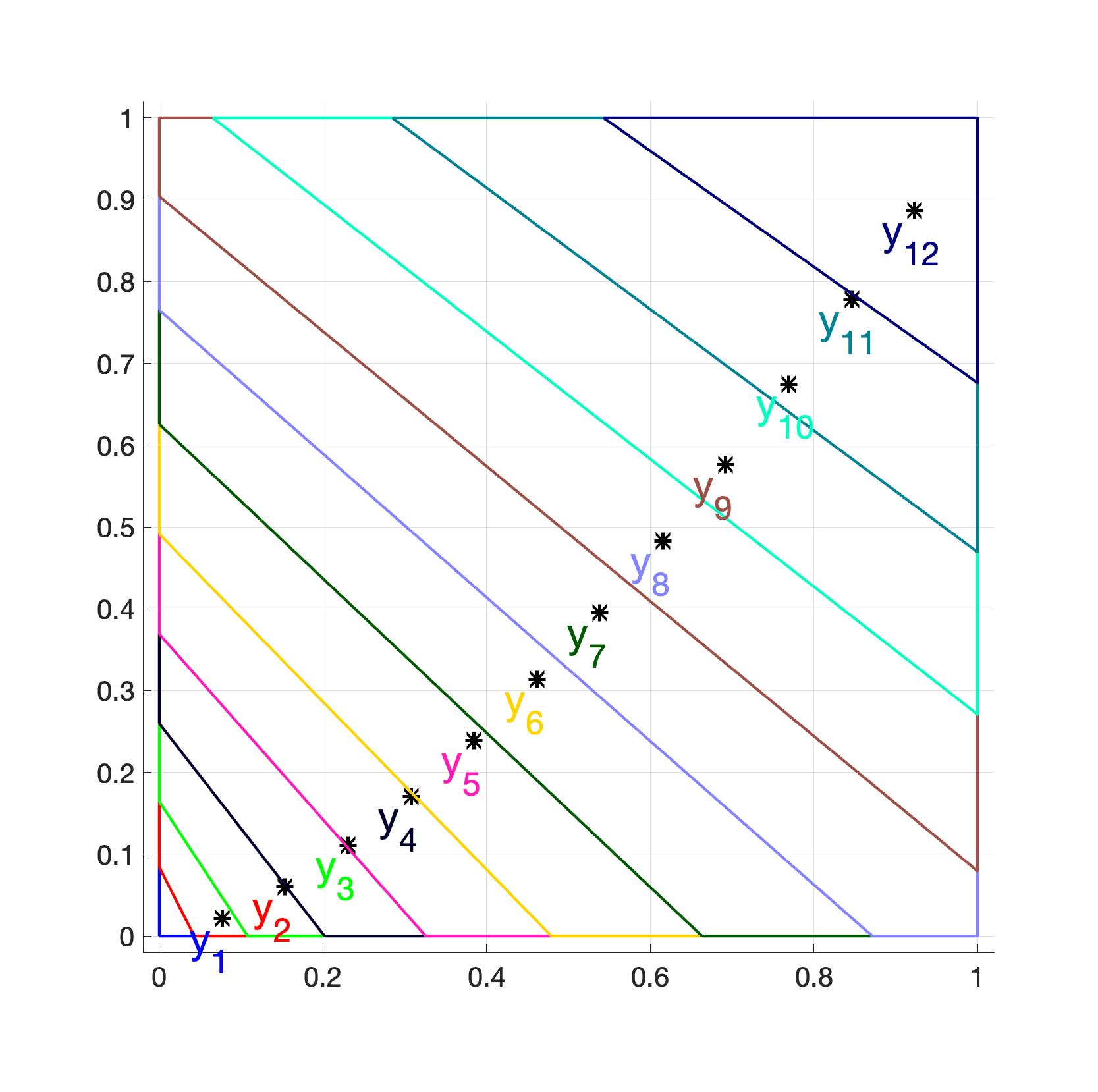}
        \caption{$d\mu_1 = dx_1dx_1$}
    \end{subfigure}
    \begin{subfigure}{.49\textwidth}
        \centering
        \includegraphics[width=\linewidth]{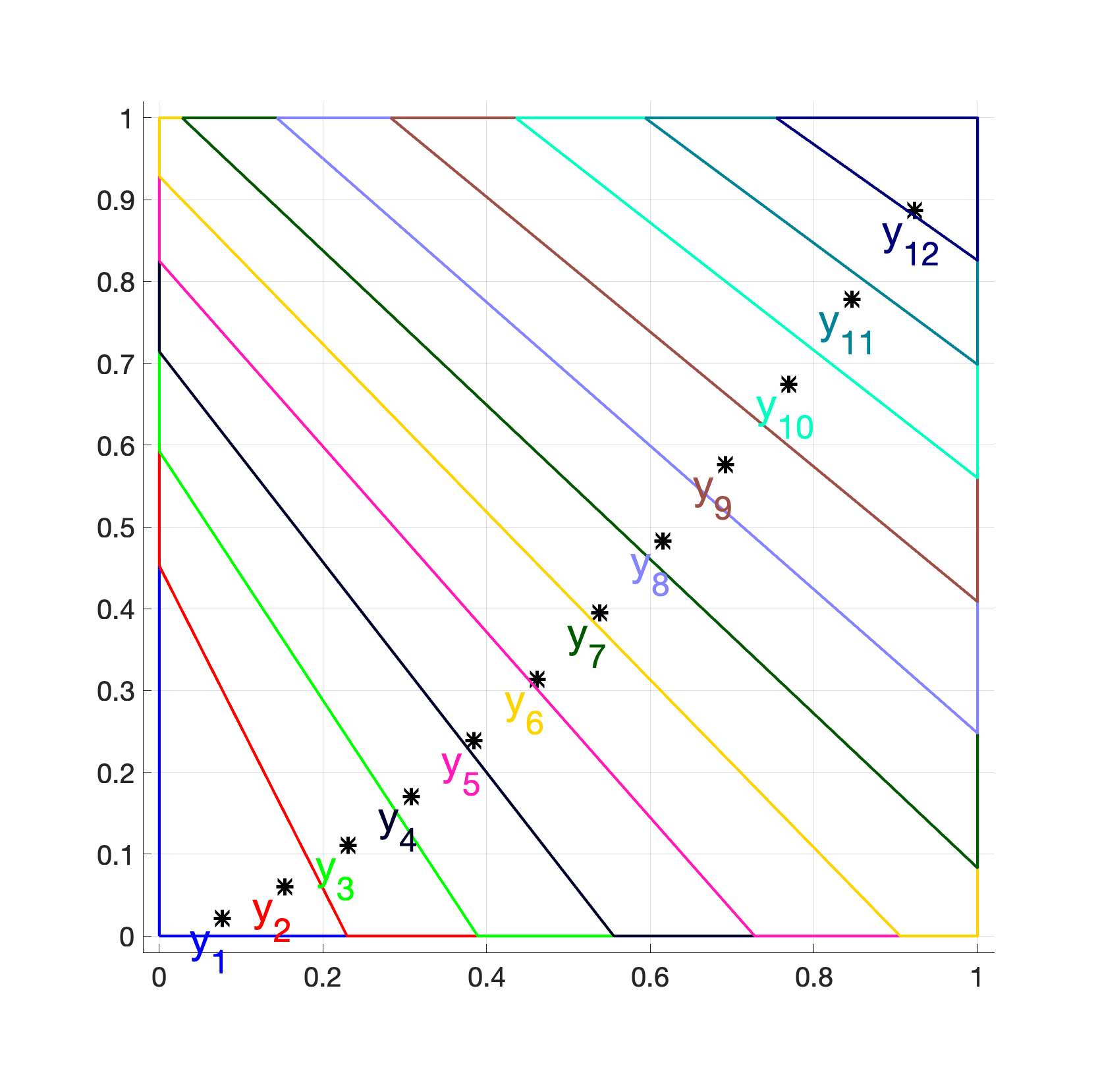}
        \caption{$d\mu_2 = 4x_1x_2dx_1dx_1$}
    \end{subfigure}
    \caption{Target points on a curve $x_2=x_1^{1.5}$, $N = 12$}
    \label{curve}
\end{figure}

\begin{figure}[ht]
    \centering
    \begin{subfigure}{.49\textwidth}
        \centering
        \includegraphics[width=\linewidth]{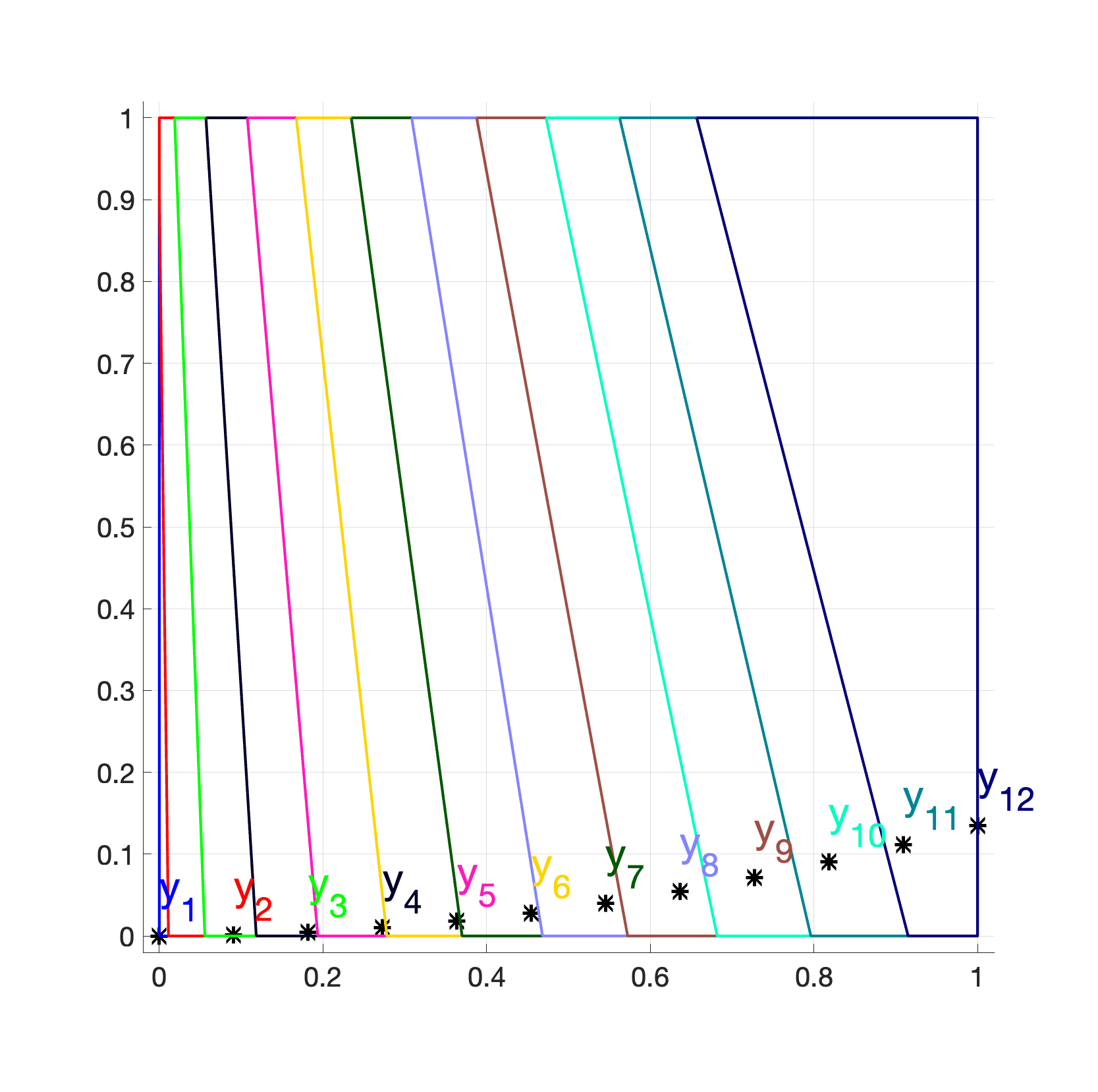}
        \caption{$d\mu_1 = dx_1dx_1$}
    \end{subfigure}
    \begin{subfigure}{.49\textwidth}
        \centering
        \includegraphics[width=\linewidth]{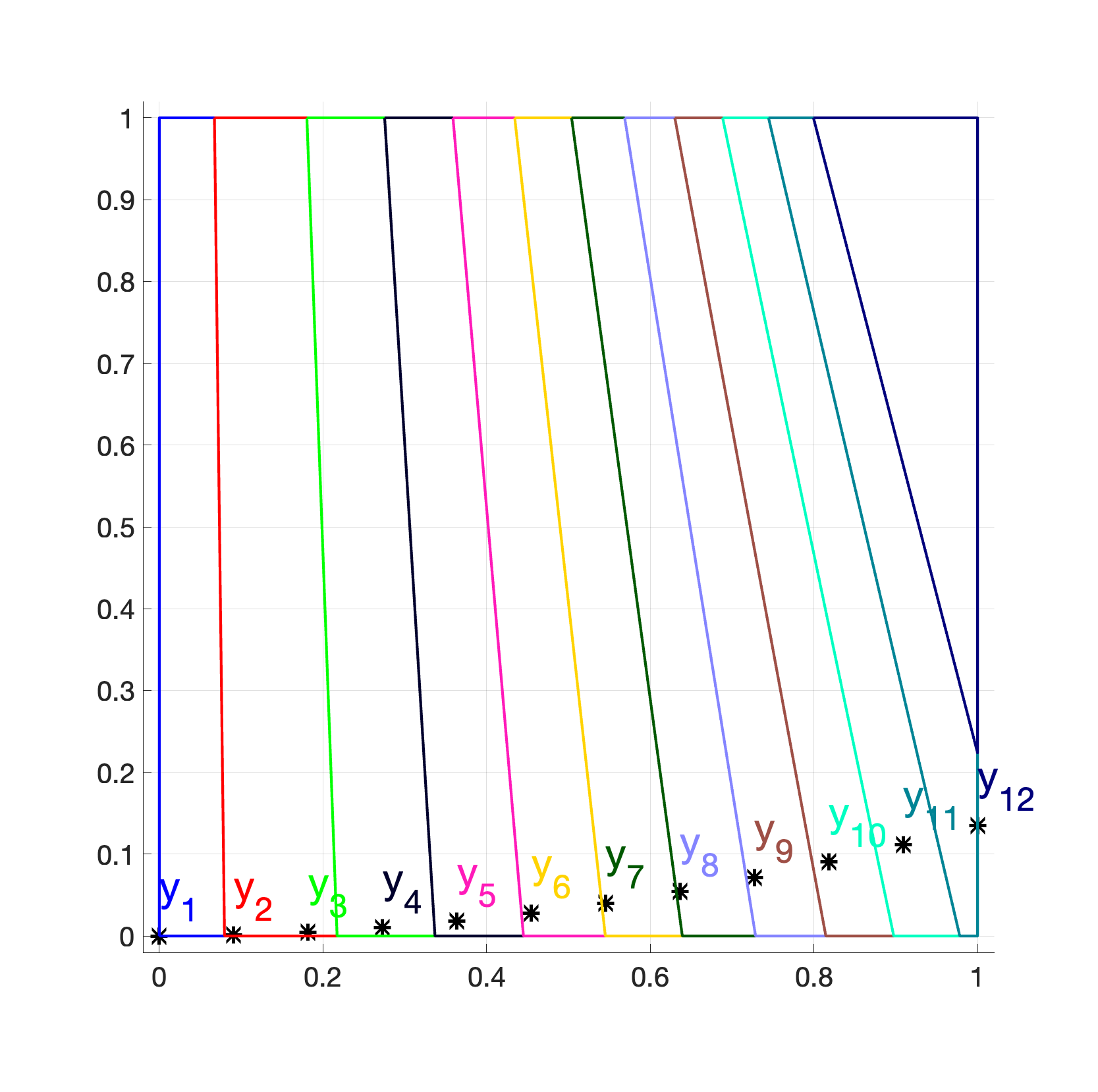}
        \caption{$d\mu_2 = 4x_1x_2dx_1dx_1$}
    \end{subfigure}
    \caption{Target points on a scaled parabola $x_2 = \big{(}\frac{x_1}{e}\big{)}^2$, $N=12$}
    \label{scaled parabola}
\end{figure}

\bibliographystyle{plain}


\clearpage
\appendix
\section{Appendix: Algorithm Pseudocodes}

\begin{algorithm}
		\caption{\textit{DampedNewton} -- Damped Newton's Method}\label{Alg_DampedNewton}
		\begin{algorithmic}
			\Require $\Omega$, $\mu(x)$, $\mathbf{v^0}$, $y_i$, $i=1,2\dots, N$, $\mathtt{MAXIT}$, $\mathtt{TOL}$
			\Comment{Default: $\mathbf{v^0} = \mathbf{0}$, $\mathtt{MAXIT=20}$, ${\mathtt{TOL}}=10^{-5}$}
			\State $\{err^0\}_{i} \gets \mu(Lag_i(\y,\mathbf{v^0})) - \frac{e^{-v^0_i}}{\sum_{k=1}^N e^{-v^0_k}}$,  $\forall i$
			\For{$j \gets 0$ to $\mathtt{MAXIT}$}
    			\State $H^j \gets \nabla G(\mathbf{v}^{(j)})$\Comment{Centered Finite Difference scheme}
    			\State $\mathbf{s} \gets - (H^j)^{\dagger}\mathbf{err}^{j}$, $\mathbf{v}^{j+1} \gets \mathbf{v}^{j} + \mathbf{s}$
    			\While{$\text{Lag}(y_{i-1}) \cap \text{Lag}(y_{i}) \cap \text{Lag}(y_{i+1}) \neq \emptyset, \quad i=2,3,\dots, N-1$} \Comment{Damping if not nested}
                    \State $\mathbf{s} \gets \frac{1}{2}\mathbf{s}$, $\mathbf{v}^{j+1} \gets \mathbf{v}^{j} + s$
    			\EndWhile
    			\State $\{err^{j+1}\}_{i} \gets \mu(Lag_i(\y,\mathbf{v^{j+1}})) - \frac{e^{-v^{j+1}_i}}{\sum_{k=1}^N e^{-v^{j+1}_i}}$,  $\forall i$
    			\If{$||\mathbf{err}^{j+1}||_{\infty}\leq \mathtt{TOL}$}
                    \State \Return $\mathbf{v^{j+1}}$
    			\EndIf
			\EndFor
	\end{algorithmic}
\end{algorithm}

\begin{algorithm}
		\caption{\textit{ErrorFunc} --Error Computation}\label{Alg_ErrComp}
		\begin{algorithmic}
			\Require $\{\Omega,\ \mu(x)\}$, $y_i$, $i=1,2,\dots, N$, $C$
            \State $v_1 \gets 0$
			\For{$j \gets 2$ to $N$}
                \State $\mu(\text{Lag}_{j-1}(v_1,\dots, v_j) - e^{C^{*}-v_{j-1}} = 0 \quad \Rightarrow \quad v_j$
                \If{$e^{C^{*}-v_{j}} - \mu(\text{Lag}_{j}(v_1,\dots, v_j)>0$}
                    \Return $Error \gets NAN $ \Comment{Remark \ref{Nest_Issue2}}
                \EndIf
            \EndFor
            \State $Error \gets \mu(\text{Lag}_{N}(v_1,\dots, v_N) - e^{C^{*}-v_{N}}$
		\end{algorithmic}
\end{algorithm}

\begin{algorithm}
		\caption{\textit{NestBisection} --Nested Bisection on $C$ Value}\label{Alg_NestBisect}
		\begin{algorithmic}
			\Require $\{\Omega,\ \mu(x)\}$, $y_i$, $i=1,2,\dots, N$, $C^0$, $C^1$, $\mathtt{TOL}$ \Comment{Default: $\mathtt{TOL}= 10^{-5}$}
            \While{$|Error|>\mathtt{TOL}$}
    			\State $C^{*} \gets \frac{1}{2}(C^0 + C^1)$
    			\State $Error \gets ErrorFunc(C^{*})$ \Comment{Algorithm \ref{Alg_ErrComp}}
                \If{$Error >0$}
                    \State $C^0 \gets C^{*}$ 
                \ElsIf{$Error >0 \lor Error = NAN$}
                    \State $C^1 \gets C^{*}$
                \EndIf
            \EndWhile
		\end{algorithmic}
\end{algorithm}

\begin{algorithm}[H]
		\caption{\textit{NestNewton} --Nested Newton on $C$ Value}\label{Alg_NestNewt}
		\begin{algorithmic}
			\Require $\{\Omega,\ \mu(x)\}$, $y_i$, $i=1,2,\dots, N$, $C^0$, $\mathtt{TOL}$ \Comment{Default: $\mathtt{TOL}= 10^{-5}$}
            \State $h \gets \mathtt{eps}^{\frac13}$, $Error \gets ErrorFunc(C^{0})$ \Comment{Algorithm \ref{Alg_ErrComp}}
            \While{$|Error|>\mathtt{TOL}$}
            \State $H \gets \frac{ErrorFunc(C^{0}+h)-ErrorFunc(C^{0}-h)}{2h}$ \Comment{Centered Finite Difference Scheme}
            \State $s \gets - \frac{Error}{H}$, $C^{1} \gets C^0 + s$
            \While{$C^{1} \geq 0$} \Comment{Solution is always negative}
            \State $s \gets \frac{1}{2}s$, $C^{1} \gets C^0 + s$
    		\EndWhile
            \State $Error \gets ErrorFunc(C^1)$
            \While{$Error = NAN$} \Comment{Remark \ref{Nest_Issue2}}
            \State $s \gets \frac{1}{2}s$, $C^{1} \gets C^0 + s$, $Error \gets ErrorFunc(C^1)$
    		\EndWhile
            \State $C^0 \gets C^1$, 
            \EndWhile
		\end{algorithmic}
\end{algorithm}

\section{Convergence of the Nested Algorithm of the Internal Energy Problem}\label{convergence}
While  Subsection \ref{DiscContinuous} focused on the numerical implementation, we now shift to a more theoretical perspective. Accordingly, we return to the original notation used in Section \ref{nestedness}, which is better suited for analytical arguments. In this section, we will prove the convergence of  the numerical algorithm introduced in Subsection \ref{DiscContinuous} for computing the solution of \eqref{cong} under the assumption that the solution exhibits discrete nestedness. 

We begin by describing the algorithm in full detail.  It  aims to find a measure $\nu \in \mathcal{P}(Y)$ that satisfies the first-order optimality condition \eqref{optcon} associated with \eqref{cong}. To solve this system, we exploit the correspondence between $\nu$ and $v$. Specifically, we express $\nu_i$ as
$\nu_i = (f')^{-1}(C - v_i),$
and associate each $\nu_i$ with a subset $X_i(v) \subset X$ such that $\mu(X_i(v)) = \nu_i$.
For each chosen $ C $, we determine $ v^C = (v^C_i)_{i=1}^N $ using the relation between $ \nu_i^C = \mu(X_i(v^C)) $ and $ (f')^{-1}(C - v^C_i) $. We then define an error function $h(C)$ which we will prove is monotone. Consequently, the solution can be computed using the bisection method to solve $ h(C) = 0 $.

We define $h(C)$ as follows:

For each $C$, we set $v^C_1 = 0$ and define $X_1(v^C) = X^N_{\ge}(y_1, k_1)$ such that
\[
\mu(X^N_{\ge}(y_1, k_1)) = \min\left\{1, (f')^{-1}(C - v^C_1)\right\}.
\]
Then, for each $i \geq 2$, we set $v^C_i = v^C_{i-1} + k_{i-1}$ and define $\overline{k}_i$ such that
\[
\mu(X^N_{\ge}(y_i, \overline{k}_i)) = (f')^{-1}(C - v^C_i)+\sum_{j=1}^{i-1}\mu(X_j(v^C)).
\]
If $\overline{k}_i < k_{\max}(y_{i-1}, k_{i-1})$, we set $k_i = \overline{k}_i$; otherwise, we set $k_i = k_{\max}(y_{i-1}, k_{i-1})$. We then define
\[
X_i(v^C) = X^N_{\ge}(y_i, k_i) \setminus X^N_{\ge}(y_{i-1}, k_{i-1}).
\]

This process continues until we reach some $m^C=\max\{i:1-\mu(X_\ge^N(y_{i-2},k_{i-2}))\ge (f')^{-1}(C-v_{i-1}^C)\} \leq N,$ 
at which point we set
\(
X_{m^C}(v^C) = X \setminus X^N_{\ge}(y_{m^C-1}, k_{m^C-1}),
\quad \text{and} \quad
X_i(v^C) = \emptyset \quad \text{for all } i > m^C.
\) If $m^C<N,$ we set $h(C)=-\infty;$ otherwise, we set $h(C)=1-\mu(X^N_\ge(y_{N-1},k_{N-1}))-(f')^{-1}(C - v^C_N).$ Also, we set $\nu_i^C=\mu(X_i(v^C))$ for all $1\le i\le m^C$ and $\nu^C=\sum_{i=1}^{m^C}\nu_i^C\delta_{y_i}.$

\begin{rem}
    If $k_i < k_{\max}(y_{i-1}, k_{i-1})$, then $\nu_i^C = \mu(X_i(v^C)) = (f')^{-1}(C - v^C_i)$. Otherwise, $\nu_i^C = \mu(X_i(v^C)) \geq (f')^{-1}(C - v^C_i)$.
\end{rem}

\begin{lem}\label{lem: monotonicity of error function}
    The function $h(C)$ is decreasing in $C.$
\end{lem}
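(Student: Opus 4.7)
The plan is to reduce everything to a joint monotonicity claim on the auxiliary sequences $v_i^C$, $k_i(C)$, and $M_i(C) := \mu(X_{\geq}^N(y_i, k_i(C)))$ produced by the algorithm, which I would establish by induction on $i$. Specifically, for $C < C'$ at which both runs of the algorithm reach step $i$, the claim is
$v_i^{C} \geq v_i^{C'}$, $k_i(C) \geq k_i(C')$, and $M_i(C) \leq M_i(C')$.
This is the workhorse; the conclusion then drops out by arithmetic. The base case $i = 1$ is immediate: $v_1^C = 0$ for all $C$, and $M_1(C) = \min\{1,(f')^{-1}(C)\}$ is non-decreasing in $C$ because strict convexity of $f$ makes $(f')^{-1}$ strictly increasing; then $k_1(C)$ is correspondingly non-increasing, using that $\mu(X_{\geq}^N(y_1,\cdot))$ is strictly decreasing under our standing assumption $\mu(X_{=}^N(y_i,k))=0$.

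For the inductive step, the recurrence $v_i^C = v_{i-1}^C + k_{i-1}(C)$ together with the inductive hypothesis yields $v_i^C \geq v_i^{C'}$, so $(f')^{-1}(C - v_i^C)$ is non-decreasing in $C$. The candidate level $\overline{k}_i$, defined by $\mu(X_{\geq}^N(y_i,\overline{k}_i)) = (f')^{-1}(C - v_i^C) + M_{i-1}(C)$, therefore satisfies $\overline{k}_i(C) \geq \overline{k}_i(C')$, since the right side is non-decreasing in $C$ and the left side is strictly decreasing in $\overline{k}_i$. Moreover, $k_{\max}(y_{i-1},\cdot)$ is non-decreasing in its second argument (direct from its definition as a supremum of $k'$ for which a superlevel-set inclusion holds, together with the fact that the superlevel sets shrink as $k$ grows), so $k_{\max}(y_{i-1}, k_{i-1}(C)) \geq k_{\max}(y_{i-1}, k_{i-1}(C'))$. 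Taking the pointwise minimum preserves the inequality, giving $k_i(C) \geq k_i(C')$ and hence $M_i(C) \leq M_i(C')$.

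Given the induction, the monotonicity of $h$ follows easily. When both $h(C)$ and $h(C')$ are finite (i.e., $m^C = m^{C'} = N$), writing $h(C) = 1 - M_{N-1}(C) - (f')^{-1}(C - v_N^C)$ exhibits $h$ as the sum of a non-increasing term in $C$ and a strictly decreasing term (since $C - v_N^C$ is strictly increasing in $C$ and $(f')^{-1}$ is strictly increasing), so $h$ is strictly decreasing on this range. For the edge case where $h(C)=-\infty$, the same inductive monotonicity shows that the feasibility condition $1 - M_{i-2}(C) \geq (f')^{-1}(C - v_{i-1}^C)$ becomes strictly harder to satisfy as $C$ grows (the left side non-increasing, the right side strictly increasing), so $m^C$ is itself non-increasing in $C$; this forces $h(C) = -\infty \Rightarrow h(C') = -\infty$ for every $C' > C$. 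The main obstacle I anticipate is the bookkeeping for this last case, in particular verifying that the comparisons between the two runs at $C$ and $C'$ are meaningful when one process terminates earlier than the other; but this is handled cleanly by folding the monotonicity of $m^C$ into the inductive statement, so that the comparison is only asserted where both runs are actually defined.
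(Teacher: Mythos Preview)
Your proof is correct and follows the same overall inductive scheme as the paper---both arguments propagate the comparison $v_i^C \geq v_i^{C'}$, $k_i(C)\geq k_i(C')$ forward in $i$ and then read off the monotonicity of $h$---but you handle the key inductive step more cleanly. The paper argues by contradiction: assuming $k_{i+1}\leq k_{i+1}'$, it first deduces that the cap $k_{i+1}=k_{\max}(y_i,k_i)$ must be active, then uses a geometric argument (taking a subsequential limit to produce a point in $\overline{X_=^N(y_i,k_i)}\cap\overline{X_=^N(y_{i+1},k_{\max}(y_i,k_i))}$, and invoking continuity plus \ref{nonzero1}) to rule out $k_{i+1}'=k_{\max}(y_i,k_i)$ and reach a contradiction. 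You bypass all of this by observing that $k_{\max}(y_{i-1},\cdot)$ is non-decreasing in its second argument (immediate from the definition and the shrinking of superlevel sets), so $k_i=\min\{\overline{k}_i,\,k_{\max}(y_{i-1},k_{i-1})\}$ is the pointwise minimum of two functions that are each non-increasing in $C$. The trade-off is that your induction yields only non-strict inequalities for $k_i,v_i$, whereas the paper's argument gives strict ones; but as you correctly note, this costs nothing, since in the final expression $h(C)=1-M_{N-1}(C)-(f')^{-1}(C-v_N^C)$ the term $C-v_N^C$ is strictly increasing regardless (from $C<C'$ and $v_N^C\geq v_N^{C'}$), so strict monotonicity of $h$ on the finite range still follows. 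Your treatment of the $h=-\infty$ case via monotonicity of $m^C$ is also correct and matches what the paper does.
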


\begin{proof}
Let $ C < C' $ and let $ v^C = (v^C_i)_{i=1}^N $ and $ v^{C'} = (v^{C'}_i)_{i=1}^N $ be associated with $ (X_i(v^C))_{i=1}^N $ and $ (X_i(v^{C'}))_{i=1}^N $, respectively. We adapt $v^{C'}_i$ and $k_i'$ such that they are defined similar to $v^C_i$ and $k_i$ respectively, when $C$ is replaced by $C'.$ 

By the strict monotonicity of $(f')^{-1}$, we obtain  
\[
\mu(X_1(v^C)) = (f')^{-1}(C - v^C_1) = \min\{1, (f')^{-1}(C - v^C_1)\} < \min\{1, (f')^{-1}(C' - v^{C'}_1)\} = \mu(X_1(v^{C'}))
\]
in the case where $(f')^{-1}(C - v^C_1) < 1$ since $v^C_1 = v^{C'}_1 = 0$. This yields  
\[
\mu(X^N_\ge(y_1,k_1))=\mu(X_1(v^C)) < \mu(X_1(v^{C'}))=\mu(X^N_\ge(y_1,k_1')),
\]
and since $\mu(X^N_{\ge}(y_1, k))$ is a decreasing function of $k$, this implies $k_1 > k_1'$.

On the other hand, if $(f')^{-1}(C - v^C_1) \geq 1$, then
$\mu(X_1(v^C)) = \mu(X_1(v^{C'})) = 1,$
and hence $h(C) = h(C') = -\infty$, so the comparison of $k_1$ and $k_1'$ becomes irrelevant in this case.

Proceeding to the case $(f')^{-1}(C - v^C_1) < 1$, we have  $v^C_2 = k_1 + v^C_1 > k_1' + v^{C'}_1 = v^{C'}_2.$

We now prove that $ k_{i+1} > k_{i+1}' $ and $ v^C_{i+2} > v^{C'}_{i+2} $ whenever $ k_i > k_i' $ and $ v^C_{i+1} > v^{C'}_{i+1} $. Suppose that $ k_{i+1} \leq k_{i+1}' $, then $X^N_\ge(y_{i+1},k_{i+1}')\subseteq X^N_\ge(y_{i+1},k_{i+1})$ and so  
\[
\begin{array}{ll}
\mu(X_{i+1}(v^C)) &= \mu(X^N_{\ge}(y_{i+1}, k_{i+1}) \setminus X^N_{\ge}(y_i, k_i))  \vspace{1pt}\\
&\geq \mu(X^N_{\ge}(y_{i+1}, k_{i+1}') \setminus X^N_{\ge}(y_i, k_i'))
= \mu(X_{i+1}(v^{C'})) \geq (f')^{-1}(C' - v^{C'}_{i+1}) > (f')^{-1}(C - v^C_{i+1}),
\end{array}
\]
since $ C' - v^{C'}_{i+1} > C - v^C_{i+1} $. Consequently,  $k_{i+1} = k_{\max}(y_i, k_i).$
Since $ k_i > k_i' $, we have  
\[
X^N_{\ge}(y_i, k_i) \subset X^N_{\ge}(y_i, k_i') \subseteq X^N_{\ge}(y_{i+1}, k_{\max}(y_i, k_i')) \subseteq X^N_{\ge}(y_{i+1}, k_{i+1}'),
\]
which implies that $k_{i+1}=k_{\max}(y_i,k_i) \geq k_{i+1}'.$ Thus, $ k_{i+1}' = k_{\max}(y_i, k_i)=k_{i+1} $.  

We claim that there exists $ x \in \overline{X^N_=(y_i, k_i)} \cap \overline{X^N_=(y_{i+1}, k_{\max}(y_i, k_i))} $. Consider a sequence $ k^n $ that decreases to $ k_{i+1} $. By the definition of $ k_{\max}(y_i, k_i) $, we have $ X^N_{\ge}(y_i, k_i) \not\subset X^N_>(y_{i+1}, k^n) $, so we take $ x_n \in X^N_{\ge}(y_i, k_i)  \setminus X^N_>(y_{i+1}, k^n) $. Then,  
\[
c(x_n, y_{i+1}) - c(x_n, y_i) \geq k_i\quad
\text{ and }  
\quad c(x_n, y_{i+2}) - c(x_n, y_{i+1}) \leq k^n.
\]
Taking a subsequential limit $ \overline{x} \in \overline{X} $ of $(x_n)$ gives  
\[
c(\overline{x}, y_{i+1}) - c(\overline{x}, y_i) \geq k_i\quad
\text{ and }\quad c(\overline{x}, y_{i+2}) - c(\overline{x}, y_{i+1}) \leq k_{\max}(y_i, k_i).
\]
Since $ X^N_{\ge}(y_i, k_i) \subset X^N_\ge(y_{i+1}, k_{\max}(y_i, k_i)) $, we conclude that  $\overline{x} \in \overline{X^N_=(y_i, k_i)} \cap \overline{X^N_=(y_{i+1}, k_{\max}(y_i, k_i))},$ proving our claim.  

 Since $\overline{x}\in \overline{X^N_=(y_i,k_i)}\subset \overline{X^N_>(y_i,k_i')}\setminus\overline{X^N_=(y_i,k_i')}$, we have $c(\overline{x},y_{i+1})-c(\overline{x},y_i)>k_i'$ and by the continuity of $ c(\cdot, y_{i+1}) - c(\cdot, y_i) $ at $ \overline{x} $, there exists a neighborhood $ U $ of $ \overline{x} $ such that  $c(x, y_{i+1}) - c(x, y_i) > k_i'$
for all $ x \in U\cap X $. Also, by \ref{nonzero1} and the fact $\overline{x}\in \overline{X^N_=(y_{i+1},k_{\max}(y_i,k_i))}$, there exists $ x \in U \cap X\subseteq X^N_\ge(y_i,k_i') $ such that  $
c(x, y_{i+2}) - c(x, y_{i+1}) < k_{\max}(y_i, k_i),$
which contradicts $ X^N_{\ge}(y_i, k_i') \subseteq X^N_{\ge}(y_{i+1}, k_{\max}(y_i, k_i)) $. Hence, $ k_{i+1}' \neq k_{\max}(y_i, k_i) $, so  $k_{i+1}' < k_{i+1}$ 
and  $v^C_{i+2} = k_{i+1} + v^C_{i+1} > k_{i+1}' + v^{C'}_{i+1} = v^{C'}_{i+2}.$

 Let $C_0\in \Big\{C \mid \sup\{i \mid \mu(X_i(v^C)) > 0\} = N\Big\},$ then for all $1\le i\le N-2$ we have 
\[
1 - \mu(X^N_{\ge}(y_i, k_i)) - (f')^{-1}(C_0 - v^C_{i+1}) > 0.
\] 
Since $ v^C_{i} $ and $ k_i $ decrease as $ C $ increases, $ 1 - \mu(X^N_{\ge}(y_{i}, k_{i})) $ decreases and $ (f')^{-1}(C - v^C_{i+1}) $ increases, which implies  $1 - \mu(X^N_{\ge}(y_i, k_i)) - (f')^{-1}(C - v^C_{i+1}) > 0$
decreases for all $1\le i\le N-2.$ In particular,  we have $1 - \mu(X^N_{\ge}(y_{N-2}, k_{N-2})) - (f')^{-1}(C - v^C_{N-1}) > 0$ for all $C\le C_0$ and so $\mu(X_N(v^C))=1-\mu(X^N_{\ge}(y_{N-1}, k_{N-1}))>0$ for all $C\le C_0.$ We conclude that 
\[
(-\infty,C^*)\subseteq\Big\{C \mid \sup\{i \mid \mu(X_i(v^C)) > 0\} = N\Big\} \subseteq (-\infty, C^*]
\]
for $C^*=\sup\Big\{C \mid \sup\{i \mid \mu(X_i(v^C)) > 0\} = N\Big\}.$

When $ C > C^* $, we have $ h(C) = -\infty $. When $ C = C^* $, then
\[
h(C) =
\begin{cases}
-\infty & \text{if } m^C<N 
\\
1-\mu(X^N_\ge(y_{N-1},k_{N-1}))- (f')^{-1}(C - v^C_{N}) & \text{if } m^C=N 
\end{cases}
\] where $m^C$ is defined as above.
When $ C < C^* $, since $ v^C_i $ and $ k_i $ decrease as $ C $ increases, the quantity $ 1 - \mu(X^N_{\ge}(y_{N-1}, k_{N-1})) $ decreases, while $ (f')^{-1}(C - v^C_N) $ increases. This implies that $ h(C) $ is decreasing, completing the proof.
\end{proof}

\begin{rem}\label{c range}
   By Theorem \ref{bounds} we obtain  
\[
(f')^{-1}(J_{-M_c |y_i-y_1|}(1) - M_c |y_i-y_1|) \leq \nu_i^C = (f')^{-1}(C - v^C_i) \leq (f')^{-1}(J_{M_c |y_i-y_1|}(1) + M_c |y_i-y_1|),
\]
which implies  
\[
J_{-M_c |y_i-y_1|}(1) - M_c |y_i-y_1| \leq C - v^C_i \leq J_{M_c |y_i-y_1|}(1) + M_c |y_i-y_1|.
\]
When $ i = 2 $, we get  
\[
J_{-M_c |y_2-y_1|}(1) - M_c |y_2-y_1| \leq C \leq J_{M_c |y_2-y_1|}(1) + M_c |y_2-y_1|,
\]
as $ v^C_1 = 0 $, which provides bounds on $ C $ that can be used to find $ C $ using the bisection method on $ h(C) = 0 $.

\end{rem}

\begin{rem}
    In this theoretical algorithm, the issue discussed in the last bullet point of Remark~\ref{Nest_Limit}—namely, that certain values of $C$ may produce a Laguerre tessellation that is not nested—is resolved within this framework. In such cases, nestedness can be enforced using the correction described above: whenever
\[
\overline{k}_i=v_{i+1}^* - v_i^* > k_{\max}(y_{i-1}, v_i^* - v_{i-1}^*),
\]
we replace $v_{i+1}^*$ by
\[
v_{i+1}^* := v_i^* + k_{\max}(y_{i-1}, v_i^* - v_{i-1}^*),
\]
where the notation $v^*$ refers to the updated numerical values, as also used in Remark~\ref{Nest_Issue2}. However, in all the examples shown in Subsection~\ref{numerical_examples}, this situation does not arise, and hence this correction step is omitted in the implementation.
\end{rem}
 Lemma \ref{lem: monotonicity of error function} implies that if there is some $C$ such that $h(C)=0$, the bisection algorithm will converge to this $C$.  The proposition below, in turn, verifies that if the solution is discretely nested, the $C$ corresponding to the solution satisfies $h(C)=0$.  Together, these two results ensure that if the solution is discretely nested, the bisection algorithm applied to the error function $h$ constructed above will converge to it. 
\begin{prop}
    Let $\nu$ be the solution of \eqref{cong}. If $(c,\mu,\nu)$ is discretely nested, then $\nu_i=\nu_i^C=\mu(X_i(v^C))$ for all $1\le i\le N$ where $C$ satisfies $h(C)=0.$
\end{prop}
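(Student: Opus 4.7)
The plan is to take $C$ to be the constant appearing in the first-order optimality condition \eqref{optcon} for the minimizer $\nu$ of \eqref{cong}, and to verify that with this choice the algorithm reproduces the optimal dual potential and mass distribution exactly; in particular $h(C)=0$ and $\nu_i^C=\nu_i$ for every $i$. Let $v=(v_i)_{i=1}^N$ denote the optimal Kantorovich potential normalized so that $v_1=0$, so that by \eqref{optcon} we have $\nu_i=(f')^{-1}(C-v_i)$ for all $i$ (with $\nu_i>0$ guaranteed by $\lim_{s\to 0^+}f'(s)=-\infty$). Because $(c,\mu,\nu)$ is discretely nested, Theorem~\ref{nestchar} gives the explicit description $v_{i+1}-v_i=k^N(y_i)$ and $X_i=X_\ge^N(y_i,k^N(y_i))\setminus X_\ge^N(y_{i-1},k^N(y_{i-1}))$, where $k^N(y_r)$ is characterized by $\mu(X_\ge^N(y_r,k^N(y_r)))=\sum_{j=1}^r\nu_j$.

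The argument then proceeds by induction on $i$, showing that the algorithm produces $v_i^C=v_i$ and $k_i=k^N(y_i)$. The base case is immediate: $v_1^C=0=v_1$, and the algorithm selects $k_1$ so that $\mu(X_\ge^N(y_1,k_1))=(f')^{-1}(C-v_1^C)=\nu_1$, which by the hypothesis $\mu(X_=^N(y_1,k))=0$ (guaranteeing continuity of $k\mapsto\mu(X_\ge^N(y_1,k))$) forces $k_1=k^N(y_1)$. For the inductive step, assuming $v_i^C=v_i$ and $k_{i-1}=k^N(y_{i-1})$, the candidate $\overline{k}_i$ defined in the algorithm satisfies
\[
\mu(X_\ge^N(y_i,\overline{k}_i))=(f')^{-1}(C-v_i)+\sum_{j=1}^{i-1}\nu_j=\sum_{j=1}^i\nu_j=\mu(X_\ge^N(y_i,k^N(y_i))),
\]
whence $\overline{k}_i=k^N(y_i)$.

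The main obstacle is ruling out the truncation step in the algorithm, i.e.\ showing $\overline{k}_i<k_{\max}(y_{i-1},k_{i-1})$, so that $k_i=\overline{k}_i=k^N(y_i)$ and hence $v_{i+1}^C=v_i^C+k_i=v_{i+1}$ by Theorem~\ref{nestchar}. This is exactly where the \emph{strict} inclusion in the definition of discrete nestedness plays its role: from $X_\ge^N(y_{i-1},k^N(y_{i-1}))\subset X_>^N(y_i,k^N(y_i))$ one needs to upgrade to $k_{\max}(y_{i-1},k^N(y_{i-1}))>k^N(y_i)$. This can be achieved by a short compactness/continuity argument mirroring the one used in the proof of Lemma~\ref{lem: monotonicity of error function}: were equality to hold, assumption~\ref{nonzero1} would produce a boundary point $\overline{x}\in\overline{X_=^N(y_{i-1},k^N(y_{i-1}))}\cap\overline{X_=^N(y_i,k^N(y_i))}$ together with a nearby $x\in X_\ge^N(y_{i-1},k^N(y_{i-1}))$ violating $x\in X_>^N(y_i,k^N(y_i))$, contradicting strict inclusion. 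The same induction simultaneously ensures that the algorithm reaches step $N$ (so $m^C=N$), since the residual mass $1-\sum_{j=1}^{i-1}\nu_j$ trivially exceeds $(f')^{-1}(C-v_i^C)=\nu_i$ at every step.

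Once the induction closes, the conclusion falls out directly: at step $N$ the algorithm sets $X_N(v^C)=X\setminus X_\ge^N(y_{N-1},k^N(y_{N-1}))$, so
\[
\mu(X_N(v^C))=1-\sum_{j=1}^{N-1}\nu_j=\nu_N=(f')^{-1}(C-v_N)=(f')^{-1}(C-v_N^C),
\]
which yields $h(C)=0$ and $\nu_i^C=\mu(X_i(v^C))=\nu_i$ for every $i$, as desired.
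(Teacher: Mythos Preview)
Your proof is correct and follows essentially the same approach as the paper: take $C$ from the optimality condition \eqref{optcon}, then inductively verify that the algorithm's outputs $v_i^C$ and $k_i$ coincide with the optimal potential $v_i$ and the splitting levels $k^N(y_i)$ coming from Theorem~\ref{nestchar}, concluding $h(C)=0$. The only notable difference is that you are more explicit about the one delicate point---upgrading the nested inclusion $X_\ge^N(y_{i-1},k^N(y_{i-1}))\subset X_>^N(y_i,k^N(y_i))$ to the \emph{strict} inequality $k^N(y_i)<k_{\max}(y_{i-1},k^N(y_{i-1}))$ needed to rule out truncation---and you correctly indicate that this follows from a compactness argument in the style of Lemma~\ref{lem: monotonicity of error function}, whereas the paper simply asserts the strict inequality as a consequence of discrete nestedness.
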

\begin{proof}
Since $ \nu $ is the solution of \eqref{cong}, $ \nu $ satisfies  
$ \nu_i = (f')^{-1}(C - v_i) $  
for all $ 1 \leq i \leq N $, where $ v $ is the Kantorovich potential of $ (c, \mu, \nu) $ with $ v_1 = 0 $ for some $C$.   Let $\nu_i^C=\mu(X_i(v^C))$ and we claim that $\nu_i^C=\nu_i.$
From the discrete nestedness of $ (c, \mu, \nu) $, we have  
$ \mu(X^N_{\ge}(y_1, v_2 - v_1)) = \nu_1.$  Then,  
$ \mu(X_1(v^C))=\mu(X^N_{\ge}(y_1, v_2 - v_1)) = (f')^{-1}(C) = \nu_1. $ which implies $ v_2 - v_1 = k_1 $, and so  
$ v^C_2 = k_1 + v_1 = v_2. $  
Also, from  the fact
$ \nu_2 = (f')^{-1}(C - v_2), $  
and the discrete nestedness, we get  
$ \mu(X^N_{\ge}(y_2, v_3 - v_2)) = \nu_1 + \nu_2, $  
and  
$ v_3 - v_2 < k_{\max}(y_1, v_2 - v_1). $  
As $ v^C_2 = v_2 $, we conclude that  
$ \nu_2^C = (f')^{-1}(C - v^C_2) $  
and  
$ \overline{k_2} = v_3 - v_2 < k_{\max}(y_1, k_1) $  
and we set $ k_2 = \overline{k}_2 $ where  
$ \mu(X^N_{\ge}(y_2, \overline{k}_2)) = \nu_1^C + \nu_2^C. $  
So, $ v_3 - v_2 = k_2 $ and $ v_3 = k_2 + v^C_2 = v^C_3. $

Proceeding inductively, we get $ v^C_i = v_i $ and $ \nu_i^C = \nu_i $ for all $ 1 \leq i \leq N $ and $h(C)=1-\mu(X^N_\ge(y_{N-1},k_{N-1}))-(f')^{-1}(C-v^C_N)=\nu_N-(f')^{-1}(C-v^C_N)=0$, which completes the proof.

 \end{proof}

\end{document}